\newcommand{\Z}{{\mathbb Z}}
\newcommand{\alt}{\text{\rm alt}}
\newcommand{\Lalt}{\vec\LL^2_\alt}
\newcommand{\palt}{p_\alt}
\newcommand\LL{{\mathbb L}}
\renewcommand{\P}{{\mathbb P}}
\newcommand{\E}{{\mathbb E}}
\newcommand{\dof}{\it} %{\bf\boldmath}
\newcommand\ol{\overline}
\newcommand\oo{\infty}
\newcommand\sm{\setminus}
\newcommand\De{\Delta}
\newcommand\rad{\text{\rm rad}}
\renewcommand\b{\beta}
\newcommand\sF{{\mathcal F}}
\newcommand\resp{respectively}
\renewcommand\a{\alpha}
\renewcommand\t{\theta}
\renewcommand\H{L}
\newcommand\es{\varnothing}
\newcommand\Om{\Omega}
\newcommand\om{\omega}
\newcommand\s{\sigma}
\newcommand\df[1]{\emph{#1}}
\newcommand\tH{\widetilde H}
\newcommand\oH{\overline H}
\newcommand\Du{\De_\text{\rm u}}
\renewcommand\o{\text{\rm o}}
\newcommand\lam{\lambda}
\newcommand\axmn{A^{x,m}(n)}
\newcommand\sD{{\mathcal D}}
\newcommand\nneg{non-negative}
\newcommand\as{a.s.}
\newenvironment{letlist}{\begin{list}{\rm(\alph{mycount})}%
   {\usecounter{mycount}\labelwidth=1cm\itemsep 0pt}}{\end{list}}
\newcommand\mlra{\stackrel{M}{\longleftrightarrow}}
\newcommand\klra{\stackrel{K}{\longleftrightarrow}}
\newcommand{\fr}{\rightarrowtail}
\newcommand{\di}{\Lambda}
\newcommand\eps{\epsilon}
\newcommand\dil{\lambda}
\newcommand\rap{\stackrel{+}{\fr}}
\newcommand\ram{\stackrel{-}{\fr}}
\newcommand\frp[1]{\rap_#1}
\newcommand\frl{\fr_\lambda}
\newcommand\rapl{\rap_\lambda}
\newcommand\hd{d_M}
\newcommand\kd{d_K}
\newcommand\qq{\qquad}
\newcommand\q{\quad}
\newcommand\pL{p_{\text{\rm L}}}
\newcommand\pM{p_{\text{\rm M}}}
\newcommand\down{\kern5pt\downarrow\kern-4pt}
\newcommand\bigmid{\,\big|\,}
\newcommand\Lip{\text{\sc lip}}
\newcommand\pco{\vec p_{\text{\rm c}}}
\newcommand\pca{p_{\text{\rm c}}^\downarrow}
\newcommand\pc{p_{\text{\rm c}}}
\newtheorem{thm}{Theorem}
\newtheorem{lemma}[thm]{Lemma}
\newtheorem{prop}[thm]{Proposition}
\newtheorem{cor}[thm]{Corollary}
\newcounter{mycount}
\begin{document}
\title{Geometry of Lipschitz percolation}

\author[Grimmett]{G.\ R.\ Grimmett}
\address[G.\ R.\ Grimmett]{Statistical Laboratory,
Centre for Mathematical Sciences, Cambridge University,
Wilberforce Road, Cambridge CB3 0WB, UK}
\email{g.r.grimmett@statslab.cam.ac.uk}
\urladdr{http://www.statslab.cam.ac.uk/$\sim$grg/}

\author[Holroyd]{A.\ E.\ Holroyd}
\address[A.\ E.\ Holroyd]{\sloppypar Microsoft Research,
1 Microsoft Way, Redmond WA 98052, USA; and Department of
Mathematics, University of British Columbia, 121-1984
Mathematics Road, Vancouver, BC V6T 1Z2, Canada} \email{holroyd
at microsoft dot com}
\urladdr{http://research.microsoft.com/$\sim$holroyd/}

\date{\today}

\keywords{Percolation, Lipschitz embedding, random surface, branching
process, total progeny} \subjclass[2010]{60K35, 82B20}

\begin{abstract}
We prove several facts concerning Lipschitz percolation,
including the following. The critical probability $\pL$ for the
existence of an open Lipschitz surface in site percolation on
$\Z^d$ with $d\ge 2$ satisfies the improved bound $\pL\le
1-1/[8(d-1)]$. Whenever $p>\pL$, the height of the lowest
Lipschitz surface above the origin has an exponentially
decaying tail.  The lowest surface is dominated stochastically
by the boundary of a union of certain independent, identically
distributed random subsets of $\Z^d$.  As a consequence, for
$p$ sufficiently close to $1$, the connected regions of
$\Z^{d-1}$ above which the surface has height $2$ or more
exhibit stretched-exponential tail behaviour.
\end{abstract}

\maketitle

\section{Lipschitz percolation}\label{sec:intro}

We consider site percolation with parameter $p$ on the lattice $\Z^d$ with $d
\ge 2$, with law denoted $\P_p$. The existence of open Lipschitz surfaces was
investigated in \cite{DDGHS}, the main theorem of which may be summarized as
follows. Let $\|\cdot\|$ denote the $\ell^1$-norm, and write
$\Z^+=\{1,2,\dots\}$ and $\Z^+_0=\{0\}\cup\Z^+$. A function $F:
\Z^{d-1}\to\Z^+$ is called {\dof Lipschitz} if:
\begin{equation}
\label{defLip0}
\text{for any } x,y\in \Z^{d-1} \text{ with } \|x-y\|=1,
\text{ we have } |F(x)-F(y)|\leq 1.
\end{equation}
Let $\Lip$ be the event that there exists a Lipschitz function
$F: \Z^{d-1}\to\Z^+$ such that,
\begin{equation}\label{defLip3}
\text{for each $x\in \Z^{d-1}$,
the site $(x,F(x))\in\Z^{d}$ is open.}
\end{equation}
The event $\Lip$ is clearly increasing. Since it is invariant
under translation of $\Z^{d}$ by the vector $(1,0,\dots,0)$,
its probability equals either $0$ or $1$. Therefore, there
exists $\pL \in [0,1]$ such that:
$$
\P_p(\Lip) = \begin{cases} 0 &\text{if } p<\pL,\\
1 &\text{if } p>\pL.
\end{cases}
$$

It was proved in \cite{DDGHS} that $0<\pL<1$, and more concretely that $0 <
\pL \le 1-(2d)^{-2}$. As noted in \cite{DDGHS}, when $p>\pL$, there exists  a
Lipschitz function $F$ satisfying \eqref{defLip0} with the property that the
random field $(F(x): x \in \Z^{d-1})$ is stationary and ergodic. Applications
of these and related statements may be found in \cite{DDS,GH10b,GH10a}.

\section{Main results}\label{sec:mainr}
Our first result is
an improvement of the upper bound for $\pL$ of \cite{DDGHS}.

\begin{thm}\label{thm:pcbnd}
For $d \ge 2$ we have $\pL\le 1-[8(d-1)]^{-1}$.
\end{thm}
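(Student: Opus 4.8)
The plan is to show that when $q:=1-p<[8(d-1)]^{-1}$ (equivalently $p>1-[8(d-1)]^{-1}$) the event $\Lip$ has probability $1$; by the $0$--$1$ law recalled above this suffices, and it then forces $\pL\le 1-[8(d-1)]^{-1}$. First I would set up the \emph{lowest} open Lipschitz surface. Define $F(x):=\inf\{G(x):G\text{ an open Lipschitz surface}\}$, with $\inf\es=\infty$. The point of this definition is that if $F(x)<\infty$ for every $x\in\Z^{d-1}$, then $F$ is \emph{itself} an open Lipschitz surface: the pointwise infimum of $\Z^+$-valued $1$-Lipschitz functions is attained (integer values, bounded below by $1$) and is again $1$-Lipschitz, and at each $x$ an attaining surface certifies that $(x,F(x))$ is open. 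So $\Lip$ reduces to $\{F(x)<\infty\text{ for all }x\}$; by translation invariance and countable additivity it is enough to prove $\P_p(F(0)=\infty)=0$, which will follow from exponential decay of $\P_p(F(0)\ge h)$ as $h\to\infty$.

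The combinatorial heart of the argument is a \emph{witness} for the event $\{F(0)\ge h\}$. Heuristically, $\{F(0)\ge h\}$ says that the lowest surface, although pushed down as far as possible everywhere by the minimisation, is nevertheless forced up to height at least $h$ over the origin; tracing the chain of forcings back down to height $1$ should produce a connected set $W=W(h)$ of \emph{closed} sites of $\Z^d$, lying in the slab $\Z^{d-1}\times\Z^+$, of combinatorial size of order at least $h$. A column in which the surface has to climb contributes a vertical run of closed sites; the Lipschitz condition \eqref{defLip0} then transmits the obstruction to at most $2(d-1)$ neighbouring columns, each of which may climb in turn; and keeping track of the direction of propagation and of whether the Lipschitz constraint is being saturated from above or from below accounts for a further bounded factor. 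The upshot I would aim for is: from any site of $W$ there are at most $8(d-1)$ choices for the ``next'' site of $W$. Equivalently, $F(0)-1$ is stochastically dominated by the total progeny of a Galton--Watson process whose offspring law has mean at most $8(d-1)\,q$.

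Granted the witness, a Peierls-type count finishes things. The number of admissible witnesses $W$ of size $n$ anchored near the origin is at most (a polynomial in $n$) times $[8(d-1)]^{n}$; each occurs with probability $q^{n}$ by independence of the sites; and $\{F(0)\ge h\}$ demands some admissible $W$ of size at least $ch$ for a constant $c>0$. Hence $\P_p(F(0)\ge h)\le C\,[8(d-1)\,q]^{\,ch}$, which decays exponentially exactly when $8(d-1)\,q<1$, i.e.\ when $p>1-[8(d-1)]^{-1}$; in the branching formulation this is just subcriticality of the dominating process, whose total progeny then has an exponentially small tail. Summing over $h$ gives $\P_p(F(0)=\infty)=0$, and the reduction of the first paragraph yields $\P_p(\Lip)=1$, hence the claimed bound on $\pL$.

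I expect the main obstacle to be the witness step of the second paragraph, and in particular pinning the branching number down to $8(d-1)$ rather than the cruder $(2d)^2$ of \cite{DDGHS}. Because $F$ is defined by a \emph{global} minimisation, ``why $F(0)$ is large'' is not a priori a local event, so one must run the right exploration: reveal sites only as the obstruction reaches them, so that the witness is genuinely tree-like and distinct revealed sites carry independent occupancy, and install a charging scheme that counts each closed site once and guarantees at most $8(d-1)$ successors at every step. Once this lemma is in place with the stated constant, the rest (the Peierls sum, or equivalently standard subcritical branching-process theory) is routine.
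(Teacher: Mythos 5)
Your overall strategy matches the paper's: reduce $\Lip$ to the event $\{F(0)<\infty\text{ for all }x\}$, prove an exponential tail for $F(0)$ by a first-moment (Peierls-type) count over witnesses, and read off $\pL\le 1-[8(d-1)]^{-1}$. The constant is right. But the intermediate lemma you propose to prove is not quite the right one, and I do not think it can be proved as stated.

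You ask for a witness $W$ that is a connected set of \emph{closed} sites with ``at most $8(d-1)$ choices for the next site.'' The paper's witness (Section \ref{sec:betbnd}) is instead an admissible $\dil$-path: a sequence of steps each of which is either an upward step U (whose \emph{endpoint} must be closed, contributing a factor $q$ and essentially one choice) or a downward-diagonal step DD (whose endpoint is unconstrained, contributing $2(d-1)$ direction choices and no probability factor). Two things follow. First, only half the sites of the witness are closed, so $W$ is not an all-closed set. Second, and more importantly, the bound $[8(d-1)]^n$ does not arise as a per-site branching number: it is the global count $\binom{2n}{n}[2(d-1)]^n\le 4^n[2(d-1)]^n$ of paths with $n$ U-steps and $n$ DD-steps, where the $4^n$ comes from $\binom{2n}{n}\le 4^n$ counting interleavings of the two step types. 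If you try to view the witness as a chain of closed sites and bound the number of ``next closed site'' choices, you find it is unbounded, because an arbitrarily long run of DD steps (each with $2(d-1)$ directions) can intervene between consecutive closed sites; the constraint that saves the count is the global relation $\#\mathrm{U}=\#\mathrm{DD}$ for a path returning to its starting height, which makes the interleaving number a single binomial coefficient rather than a product of local choices. So the lemma you flag as ``the main obstacle'' is indeed the crux, but the locally-bounded-branching formulation you aim for is the wrong target; the correct statement is the global path-count of Proposition \ref{thm:main3}, combined with Lemma \ref{lem2} (removing downward-vertical steps) and Lemma \ref{lem1} (passing from a single hill to the mountain $M_0$, i.e.\ accounting for obstructions whose source is far from the origin).

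A secondary point: to conclude you need to handle the fact that the obstruction over $0$ may originate from a distant $y\in\H$ (the surface is a supremum of local covers, equation \eqref{Fhill}). Your ``tracing the chain of forcings'' gestures at this, but the clean way to organise it is either the hill/mountain decomposition of Section \ref{sec:handm} (bound $\rad(M_0)$ by summing over the anchor $y$) or the path-reversal Lemma \ref{lemma:dual}, which turns $F(0)-1$ into a single upward path from $0$. Once you adopt the paper's $\dil$-path witness and the global $\binom{2n}{n}[2(d-1)q]^n$ count, the rest of your plan (the Peierls sum and the deduction $\P_p(F(0)=\infty)=0$) goes through as you describe.
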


This is proved in Section \ref{sec:handm}. The complementary inequality
$$
\pL\ge 1-\frac{1+\o(1)}{2d}\q\text{as} \q d \to\oo
$$
is proved in Section \ref{sec:pcineq}, yielding
that $1/d$ is the correct order of magnitude for $1-\pL(d)$
in the limit as $d \to \oo$.

A Lipschitz function $F$ satisfying \eqref{defLip0} is called
{\dof open}. For any family $\sF$ of Lipschitz functions, the
minimum (or `lowest') function
$$
\ol F(x):= \min\{F(x): F \in \sF\}
$$
is Lipschitz also. If there exists an open Lipschitz function,
there exists necessarily a lowest such function, and we refer
to it as the `lowest open Lipschitz function'. We shall
sometimes use the term `Lipschitz surface' to describe the
subset $\{(x,F(x)):x\in \Z^{d-1}\}$ of $\Z^d$, for some
Lipschitz $F$.  See Figure \ref{fig:lip}. We emphasize that
Lipschitz functions are always required to take values in the
\df{positive} integers.
\begin{figure}
 \centering\includegraphics[width=0.8\textwidth]{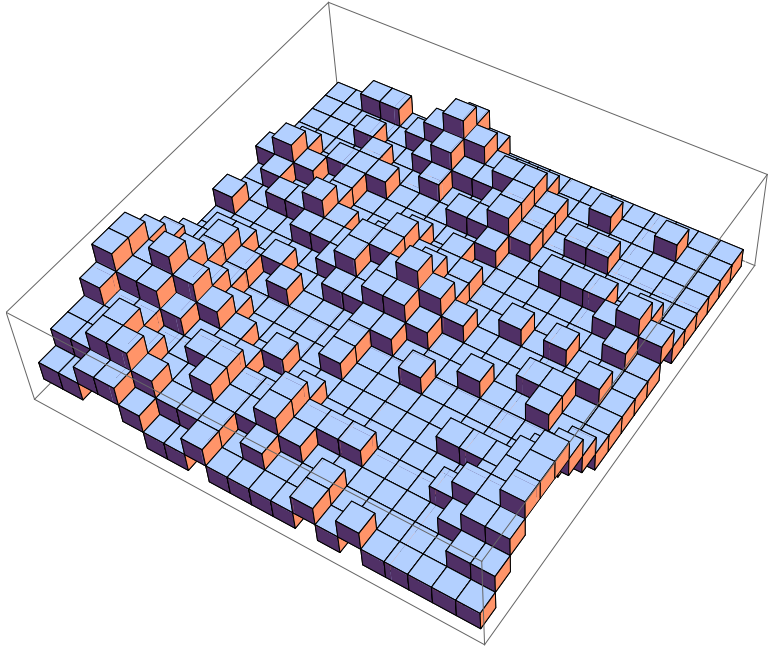}
\caption{Part of the lowest open Lipschitz surface
when $d=3$ and $p=0.18$. Each cube
represents an open site in the surface.}\label{fig:lip}
\end{figure}

For reasons of exposition, if there exists no open Lipschitz
function, we define the lowest open Lipschitz function by
$F(x)=\oo$ for all $x \in \Z^{d-1}$. Our second main result is
the following.

\begin{thm}\label{thm:expdec0}
Let $d\geq 2$ and let $F$ be the lowest open Lipschitz function.
There exists $\a=\a(d,p)$ satisfying $\a(d,p)>0$ for $p>\pL$ such that
\begin{equation}\label{eq:0dec}
\P_p(F(0)> n) \le e^{-\a n},\qq n\ge 0.
\end{equation}
\end{thm}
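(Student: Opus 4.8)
The plan is to exploit the fact that $F(0)>n$ forces the existence of a ``barrier'' of closed sites below height $n$ near the origin, and to show that such barriers are exponentially unlikely when $p>\pL$. Concretely, I would argue as follows. The lowest open Lipschitz function $F$ satisfies $F(0)\ge m+1$ precisely when there is no open Lipschitz function taking the value $m$ (or less) at the origin, equivalently when every candidate Lipschitz surface through the slab of heights $\{1,\dots,m\}$ is blocked by a closed site somewhere. I would make this precise via a notion of \emph{blocking set}: a set $S\subseteq\Z^{d-1}$ containing the origin, equipped with an admissible height assignment compatible with the Lipschitz constraint, all of whose sites are closed, and whose ``shadow'' separates the origin from infinity in the sense that no open Lipschitz function can dip below it near $0$. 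The first step is the combinatorial lemma that $\{F(0)>n\}$ implies the existence of such a blocking set $S$ with $\mathrm{diam}(S)\ge c\,n$ for a dimensional constant $c>0$ (roughly, because a Lipschitz surface can only climb one unit per step, so reaching height $n$ above the origin requires the obstruction to extend horizontally a comparable distance).

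The second step is a Peierls-type estimate: for each fixed candidate blocking set $S$ of horizontal diameter $r$, the probability that all its designated sites are closed is at most $(1-p)^{|S|}$, and $|S|$ is at least linear in $r$ (indeed $|S|\ge r$ just by connectivity). Summing over the number of such sets, whose count grows at most exponentially in $r$ for each fixed shape, gives a bound of the form $\sum_{r\ge cn} C^r (1-p)^{cr}$, which is summable and exponentially small in $n$ provided $(1-p)$ is small enough relative to the combinatorial growth rate $C$. To upgrade ``small enough'' to ``all $p>\pL$'' I would use a standard bootstrapping device: since $\P_p(\Lip)=1$ for $p>\pL$, one can iterate the one-slab estimate --- the event $\{F(0)>n\}$ essentially requires $\{F\ge 1 \text{ fails in slab }1\}$ and then, conditionally, a further rise --- so that the tail decays like $\rho(p)^{n}$ where $\rho(p)=\P_p\bigl(\text{no open Lipschitz function at level }1\bigr)<1$ for $p>\pL$. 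A renewal/subadditivity argument along the vertical direction then yields $\P_p(F(0)>n)\le e^{-\a n}$ with $\a=\a(d,p)>0$ exactly when $p>\pL$.

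Alternatively, and perhaps more cleanly, I would derive the theorem from the stochastic-domination statement advertised in the abstract (the lowest surface is dominated by the boundary of a union of i.i.d.\ random subsets of $\Z^d$): if $F(0)$ is stochastically dominated by $1+\sup$ of a suitable functional of independent pieces, and each piece has an exponential tail whenever $p>\pL$ (this being where the criticality of $\pL$ enters), then $F(0)$ inherits an exponential tail by a union bound over the at most polynomially many pieces that can reach height $n$ above the origin. Either route reduces the problem to controlling a single ``block'' or ``slab'' event.

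The main obstacle I anticipate is the passage from ``$p$ close to $1$'' to ``all $p>\pL$.'' The crude Peierls count only gives exponential decay in a sub-interval $(\pL',1]$ with $\pL'>\pL$, and closing the gap requires genuinely using that $\pL$ is the threshold for \emph{existence}. The key technical point will be to set up the conditional/renewal argument so that the relevant one-step failure probability is manifestly the complement of an existence event of positive-probability type --- i.e.\ to show that failing to have an open Lipschitz function in a single slab, given the configuration below, is an event of probability bounded away from $1$ uniformly in $p>\pL$, and that these slab events can be chained with enough independence (or FKG-type positive association) to multiply. Getting the conditioning clean, so that the ``below'' configuration does not spoil the domination, is the delicate part.
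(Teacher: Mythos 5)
You have correctly identified the crux --- passing from a Peierls bound valid only for $p$ near $1$ to the full range $p>\pL$ --- but the mechanism you propose for closing the gap does not work, and this is exactly where the paper invests its effort. Your ``renewal/subadditivity along the vertical direction'' would need an inequality along the lines of $\P_p(F(0)>n+m)\le\P_p(F(0)>n)\,\P_p(F(0)>m)$, or a conditional version. After the paper's path-reversal duality (Lemma~\ref{lemma:dual}), which identifies $\P_p(F(0)>n)$ with the connection probability $\P_p(0\fr L_n)$ for admissible $\di$-paths, one sees that a BK factorization of $\{0\fr L_{n+m}\}$ through the intermediate level $L_n$ produces a \emph{sum over the horizontal crossing position}, $\sum_{x}\P_p(0\fr(x,n))\,\P_p(0\fr L_m)$, not a product of two tails; and the direct conditioning you suggest runs into FKG in the wrong direction, since conditioning on one decreasing ``obstruction'' event can only raise, never lower, the probability of another. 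This is precisely why exponential decay throughout a subcritical phase is a theorem (Menshikov, Aizenman--Barsky) rather than a consequence of subadditivity. The paper's actual proof of Theorem~\ref{thm:expdec0} is a Menshikov-type argument: after the duality, $\P_p(0\fr L_n)$ is attacked via Russo's formula, the BK inequality, and a lower bound on the expected number of pivotal sites, yielding the differential inequality \eqref{l7} which is then integrated in $p$.

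Your alternative route through the stochastic-domination Theorem~\ref{thm:indbnd0} fails to reach $\pL$ for a related reason: to get an exponential tail for $G(0)$ one would need an exponential tail for the radius of a single hill, and the only available estimate, Lemma~\ref{lem1}, requires $\rho=8(d-1)(1-p)<1$, i.e.\ $p>1-[8(d-1)]^{-1}$. The paper uses the domination argument only for Theorem~\ref{monument-simple}, whose hypothesis $p>\pM$ is explicitly a ``$p$ close to $1$'' assumption. In short, neither of your routes gets below a strictly larger threshold $\pL'>\pL$; the missing ingredient is the Menshikov machinery applied after the path-reversal duality, not a sharper Peierls count or a renewal structure.
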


This is proved in Section \ref{sec:exp} by an adaptation of
Menshikov's proof of exponential decay for subcritical
percolation. Since the law of $F(x)$ is the same for all $x \in
\Z^{d-1}$, the choice of the origin $0$ in \eqref{eq:0dec} is
arbitrary. Theorem \ref{thm:expdec0} extends the
exponential-decay theorem of \cite{DDGHS} by removing the
condition on $p$ that is present in that work.

Our third result is a bound of a different type on the lowest open Lipschitz
function. It may be used to obtain information on the geometry of the
corresponding open surface. In preparation for this, we introduce the concept
of a local cover.  For $y\in \Z^{d-1}$, let $\sF_y$ be the set of functions
$f: \Z^{d-1} \to \Z^+_0$ that are Lipschitz in the sense of \eqref{defLip0},
and that satisfy:
\begin{letlist}
\item $f(y)>0$, and
\item for all $x\in\Z^{d-1}$, either $f(x)=0$ or the site $(x,f(x))$ is open.
\end{letlist}
The \df{local cover} at $y$ is the Lipschitz function $\ol f_y$ given by
$$
\ol f_y(z) := \min\{f(z): f\in \sF_y\}, \qq z\in \Z^{d-1}.
$$

It is shown in \cite{DDGHS} that the lowest open Lipschitz
function $F$ is given by
\begin{equation}\label{Fhill}
F(x) = \sup\{\ol f_y(x): y\in \Z^{d-1}\}.
\end{equation}
Now let $(g_y: y\in \Z^{d-1})$ be {\em independent} random
functions from $\Z^{d-1}$ to $\Z^+_0$ such that,  for each $y\in\Z^{d-1}$,
$g_y$ has the same law as $\ol f_y$. Let
\begin{equation}\label{Ghill}
G(x) := \sup\{g_y(x): y\in \Z^{d-1}\}.
\end{equation}

\begin{thm}\label{thm:indbnd0}
The lowest open Lipschitz function $F$ is dominated stoch\-astically by $G$
in that, for any increasing subset $A \subseteq [0,\oo]^{\Z^{d-1}}$,
$$
\P_p\Bigl[(F(x): x \in \Z^{d-1}) \in A\Bigr]
\le \P\Bigl[(G(x): x \in \Z^{d-1}) \in A\Bigr].
$$
\end{thm}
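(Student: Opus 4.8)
The plan is to construct a coupling of the percolation configuration with an auxiliary configuration so that $F$ is realized on the original and $G$ on the auxiliary, with $F(x)\le G(x)$ pointwise almost surely; stochastic domination then follows immediately since any increasing event $A$ satisfied by $F$ is also satisfied by the pointwise-larger $G$. The starting point is the representation \eqref{Fhill}, which expresses $F$ as a supremum over $y\in\Z^{d-1}$ of local covers $\ol f_y$, and its analogue \eqref{Ghill} for $G$ in terms of the independent copies $g_y$. The key observation is that the local cover $\ol f_y$ depends on the percolation configuration only through the states of sites in a region that grows outward from $(y,\cdot)$; more precisely, $\ol f_y$ is determined by exploring sites in order of increasing height along Lipschitz paths emanating from $y$, and this exploration can be described as a sequential revelation of i.i.d.\ site-states.

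The main step is therefore to realize the joint law of $(\ol f_y:y\in\Z^{d-1})$ on a single probability space in which each $\ol f_y$ is a deterministic function of an independent source of randomness, in such a way that the resulting joint law dominates the true (dependent) joint law. Concretely, I would enumerate $\Z^{d-1}$ and, for each $y$ in turn, expose a fresh i.i.d.\ family of Bernoulli($p$) variables indexed by $\Z^d$, and let $g_y$ be the local cover computed from that fresh family; by construction the $g_y$ are independent and each has the law of $\ol f_y$, matching \eqref{Ghill}. To compare with $F$, I would run the true exploration of all the $\ol f_y$ on the genuine configuration, but every time the exploration of $\ol f_y$ queries a site that has already been revealed by the exploration of some $\ol f_{y'}$ with $y'$ earlier in the enumeration, I would feed it instead the corresponding fresh variable from $y$'s private family. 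Because the true local cover $\ol f_y$ is a \emph{decreasing} function of the set of closed sites (adding closed sites can only force the Lipschitz function up or make it infeasible at small heights, hence can only increase $\ol f_y$ — this monotonicity is exactly what makes the local cover well behaved and is implicit in \cite{DDGHS}), replacing genuine site-states by independent fresh ones that are distributed as Bernoulli($p$) can only enlarge each $\ol f_y$ in the coupling, whence $\sup_y \ol f_y \le \sup_y g_y$, i.e.\ $F\le G$ pointwise.

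The step I expect to be the main obstacle is making the sequential exploration rigorous: one must check that the local cover $\ol f_y$ genuinely is a finite-range-revealed quantity on the event that it is finite, that the exploration terminates (or, where it does not, that the relevant values are $+\oo$ and the domination is trivial there), and above all that the monotonicity ``more closed sites $\Rightarrow$ larger $\ol f_y$'' holds with the correct treatment of the positivity constraint $f(y)>0$ and the boundary value $0$. I would handle this by working with the standard min-over-Lipschitz-functions characterization: if $\omega\le\omega'$ as configurations (so $\omega'$ has fewer closed sites), then $\sF_y(\omega)\subseteq\sF_y(\omega')$ is false in general, but $\sF_y(\omega')\subseteq\sF_y(\omega)$ holds — wait, the correct direction is that closing sites shrinks the feasible family, so I instead argue that $\ol f_y$ is an increasing function of the closed set via the dual description of $\ol f_y$ as the least Lipschitz $f\ge 0$ with $f(y)>0$ lying above all closed sites in an appropriate ``blocking'' sense, exactly as in the hill/obstacle representation of \cite{DDGHS}. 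Once this monotonicity and the finite-range exploration are in place, the coupling argument above is routine, and the theorem follows.
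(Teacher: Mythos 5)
Your high-level plan (build a coupling realising $F$ and $G$ simultaneously with $F\le G$ pointwise, and recognise $F$ as a sup of local covers) is the right one, and your eventual statement of the monotonicity — $\ol f_y$ is an \emph{increasing} function of the set of closed sites — is correct, after the confusion in direction. But the coupling step, as written, does not work, and the missing point is exactly the structural fact the paper's proof relies on.

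The sentence ``replacing genuine site-states by independent fresh ones that are distributed as Bernoulli($p$) can only enlarge each $\ol f_y$'' is false. Replacement of a Bernoulli variable by a fresh Bernoulli variable is symmetric: the fresh variable may equally well be open where the genuine one was closed, which would \emph{decrease} $\ol f_y$. Monotonicity of $\ol f_y$ in the closed set tells you nothing about such a replacement unless you know the genuine states being overwritten are all open. The argument as stated therefore gives no pointwise domination. Relatedly, the object you produce by running a ``mixed'' exploration is neither $\ol f_y$ nor $g_y$, and you haven't identified its law, so the final inequality $\sup_y\ol f_y\le\sup_y g_y$ does not follow from what precedes it.

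The ingredient you are missing is the one the paper's proof of this theorem isolates (via the hills $H_y$ of \eqref{defhill}, which are the geometric counterpart of your $\ol f_y$). When the hill $H_y$ is explored, the only sites outside $H_y$ whose states are learned are those in the ``upward boundary'' $\Du H_y=\{x+e_d:x\in H_y\}\sm H_y$, and these are all \emph{open} by definition of the hill. More importantly — and this is the decisive point — any admissible $\di$-path staying in $\Z^d\sm S$ can never take an upward step ending in $\Du S$ (because the start of that step would lie in $S$), so a subsequent exploration restricted to $\Z^d\sm\oH_i$ never re-queries any already-revealed state at all. Hence conditional on $\oH_i$, the \emph{restricted} hill at the next vertex has exactly the law of a restricted hill in a fresh percolation, and being restricted it is automatically a subset of the full (fresh) hill $J_{i+1}$. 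There is no ``overwriting'' and no appeal to monotonicity: the domination comes from genuine conditional independence plus the trivial inclusion ``restricted $\subseteq$ unrestricted''. The paper encapsulates this in a pointwise decomposition lemma ($\oH_i=\bigcup_{j\le i}\tH_j$, Lemma~\ref{thm:decomp}) and the coupling Theorem~\ref{thm:indbnd}, from which Theorem~\ref{thm:indbnd0} follows by taking local heights. To repair your proposal you would need to prove precisely this non-requerying property and replace the monotone-overwriting argument by the independence-and-inclusion argument.
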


The proof of Theorem \ref{thm:indbnd0} is presented in Section
\ref{sec:decomp}. One application is the next result, a mild extension
of which is proved in
Section~\ref{sec:subcrith}.  Let $F$ be the lowest open Lipschitz function, and
recall that, by definition,  $F\geq 1$. Let $S$ be the set of all
$x\in\Z^{d-1}$ for which $F(x)>1$.  Let $S_0$ be the vertex-set of the
component containing $0$ in the subgraph of the nearest-neighbour lattice of
$\Z^{d-1}$ induced by $S$ (and take $S_0:=\es$ if $0\notin S$).

\begin{thm}\label{monument-simple}
Let $d\geq 2$.  There exists $\pM<1$ such that, for  $p>\pM$
and $\epsilon>0$,
\begin{equation}\label{bounds}
\exp\big(-\lambda n^{1/(d-1)}\bigr)\leq \P_p(|S_0|> n) \leq
\exp\big(-\gamma n^{1/(d-1)-\epsilon}\bigr),\quad n\geq 1,
\end{equation}
where $\lambda=\lambda(d,p)$ and $\gamma=\gamma(d,p,\epsilon)$ are positive
and finite.  If $d\neq 3$ then \eqref{bounds} holds even with $\epsilon=0$.
\end{thm}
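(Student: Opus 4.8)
The plan is to establish the two inequalities of \eqref{bounds} separately; the lower bound is soft and holds for all $p\in(\pL,1)$, while the upper bound rests on Theorem~\ref{thm:indbnd0} together with tail estimates for a single local cover.

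\emph{Lower bound.} Fix $n\ge 1$ and choose $k=k(n):=\lceil c_d\, n^{1/(d-1)}\rceil$, with $c_d$ large enough that the $\ell^1$-ball $B(k-1):=\{x\in\Z^{d-1}:\|x\|\le k-1\}$ has more than $n$ points. On the event $E$ that the sites $(0,1),(0,2),\dots,(0,k)$ of $\Z^d$ are all closed we have $F(0)\ge k+1$: either there is no open Lipschitz function, so $F\equiv\oo$, or there is one and $(0,F(0))$ is open, forcing $F(0)\notin\{1,\dots,k\}$. In either case \eqref{defLip0}, iterated along a geodesic, gives $F(x)\ge k+1-\|x\|\ge 2$ for all $x\in B(k-1)$, so $B(k-1)\subseteq S$; since $B(k-1)$ is connected and contains $0$ this yields $|S_0|\ge|B(k-1)|>n$ on $E$. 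As $\P_p(E)=(1-p)^{k(n)}$, the left inequality of \eqref{bounds} follows with $\lambda=\lambda(d,p)<\oo$ (and this works for every $p>\pL$, so we may and do take $\pM\ge\pL$).

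\emph{Reduction of the upper bound.} The event $\{|S_0|>n\}$ is of the form $\{(F(x):x\in\Z^{d-1})\in A\}$ with $A\subseteq[0,\oo]^{\Z^{d-1}}$ \emph{increasing}: raising $F$ pointwise enlarges $\{x:F(x)>1\}$, hence only enlarges the component of $0$ in it. By Theorem~\ref{thm:indbnd0} it therefore suffices to bound $\P(|S_0^G|>n)$, where $G=\sup_y g_y$ is built from the independent local-cover copies $(g_y)$ and $S_0^G$ is the component of $0$ in $\{x:G(x)>1\}$. Setting $T_y:=\{x\in\Z^{d-1}:g_y(x)\ge 2\}$, we have $\{G>1\}=\bigcup_y T_y$, the sets $T_y$ are independent, and $T_y\neq\es$ with probability exactly $1-p$ — this occurs precisely when, in the independent configuration defining $g_y$, the site at height $1$ above $y$ is closed (otherwise the local cover is $\ind_{\{y\}}$). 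Thus $S_0^G$ is the cluster of $0$ in a germ--grain process on $\Z^{d-1}$ with i.i.d.\ grains $T_y$. The key ingredient is then a tail estimate for one grain, valid for $p$ close to $1$: there is $c=c(d,p)>0$ with $c(d,p)\to\oo$ as $p\to1$ such that
\[
\P\bigl(|T_y|\ge m\bigr)\le e^{-c\,m^{1/(d-1)}}\qquad\text{and}\qquad\P\bigl(\diam(T_y)\ge r\bigr)\le e^{-cr},\qquad m,r\ge 1.
\]
A local cover is determined by the heights of the nearby closed columns in its configuration, and the cheapest way to make $\{g_y\ge 2\}$ large is a single tall closed column: if $(y,1),\dots,(y,\ell)$ are closed then $g_y$ dominates the pyramid $x\mapsto\max\{0,\ell+1-\|x-y\|\}$, which has $\asymp\ell^{d-1}$ sites of height $\ge2$, at cost $(1-p)^{\ell}$; every competing mechanism (a large cluster of shallow closed columns, a ridge of moderate ones, and so on) consumes a number of closed sites at least of the order of the volume it creates, hence is only exponentially suppressed. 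Turning this into the displayed bounds requires a union bound over the possible shapes of $\{g_y\ge 2\}$ organised by the number of closed columns used, and it is here that one pays the $\epsilon$ in dimension $d=3$.

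\emph{From grain tails to the cluster bound.} Since $|S_0^G|>n$ forces $\diam(S_0^G)\ge c'_d\,n^{1/(d-1)}$, and since $S_0^G$ is the union of the grains $T_y$ meeting it (each such $y$ lying in $S_0^G$), I would explore $S_0^G$ grain by grain — a grain containing $0$, then successive grains each meeting the part already built — and sum over all exploration histories producing a cluster with more than $n$ sites. A grain of diameter $r'$ attaches to the current blob in a number of positions polynomial in $r'$ and in the diameter of the grain it meets; combining this with independence of the $(T_y)$, the grain tails above, and the convexity inequality $\sum_i r_i\ge\bigl(\sum_i r_i^{\,d-1}\bigr)^{1/(d-1)}\ge c\,n^{1/(d-1)}$ for the grain radii (using $|T_{y_i}|\le C_d r_i^{\,d-1}$ and $\sum_i|T_{y_i}|\ge|S_0^G|>n$), one finds that for $p$ close enough to $1$ the probabilistic cost dominates the entropy of the histories, so $\P(|S_0^G|>n)\le e^{-\gamma n^{1/(d-1)-\epsilon}}$, with $\epsilon$ removable unless $\Z^{d-1}=\Z^2$. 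Choosing $\pM<1$ large enough for all of the above, and combining with the reduction, gives the right inequality of \eqref{bounds}. The essential difficulty is precisely this last step: the combinatorial proliferation of cluster shapes competes with the stretched-exponential grain probabilities, and the two are exactly in balance when $d=3$, which is why the sharp exponent $\epsilon=0$ is obtained only for $d\neq3$. (The same scheme in fact delivers a mildly more general statement, which is what one would carry out in detail.)
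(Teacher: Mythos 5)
Your lower bound is correct and is essentially the ``minor modification of \eqref{now50}'' alternative that the paper itself mentions: closing the column $(0,1),\dots,(0,k)$ forces $F(0)\ge k+1$, and the Lipschitz property pushes $F\ge 2$ over the whole $\ell^1$-ball of radius $k-1$. That part is fine and matches the paper's spirit (it proves it via Corollary~\ref{cor:subexp}(a) and the observation $|S_0|\ge(2d-1)^{-1}|L(H_0)|$, but your variant is equally valid).

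The upper bound, however, has a genuine gap precisely where you say ``the essential difficulty is this last step.'' Your reduction via Theorem~\ref{thm:indbnd0} and the grain-tail estimates for $T_y$ (or $L(H_y)$) are sound and agree with the paper's set-up. But ``explore grain by grain and sum over all exploration histories'' is the entire content of the theorem, and as sketched it will not close. The obstruction is not that you have to count cluster shapes carefully; it is that a naive union bound couples the spatial entropy of where grains attach with the unbounded grain sizes in a way that is delicate to control, and the convexity inequality $\sum_i r_i\ge(\sum_i r_i^{d-1})^{1/(d-1)}$ by itself does not tell you how to trade entropy against probability when the $r_i$ fluctuate. The paper avoids this entirely: Theorem~\ref{thm:hall2} dominates the germ--grain cluster by a branching process (using $0$-monotonicity of the grain law, which you do not verify, to get the subcriticality condition $\mu=\E|K_0|-1<1$, i.e.\ $\sigma(d,p)<1$), and Theorem~\ref{thm:bptail} then converts the total-progeny tail into a first-passage time of a one-dimensional random walk with stretched-exponential steps, where one can cite precise large-deviation results (\cite[Thms~8.2.3--8.2.4]{BB}, \cite{Hey64}). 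The branching-process domination absorbs all spatial combinatorics; only the one-dimensional step distribution survives.

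This also shows that your explanation of the $d=3$ anomaly is not right. It is not that ``combinatorial proliferation of cluster shapes competes with stretched-exponential grain probabilities and the two are in balance.'' Rather, $1/(d-1)=1/2$ is a boundary case in the large-deviation theory for sums of i.i.d.\ random variables with tails $\exp(-\gamma n^{a})$: the cited theorems in \cite{BB} cover $a\in(0,\tfrac12)$ and (via \cite{Hey64}) $a=1$, but not $a=\tfrac12$. That is why Theorem~\ref{thm:bptail} carries the hypothesis $a\in(0,\tfrac12)\cup\{1\}$ and why $d=3$ requires an $\epsilon$. If you want to prove the result by a direct exploration/union-bound argument rather than routing through branching processes, you would essentially have to reprove these large-deviation estimates from scratch, and you would hit the same boundary case. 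To repair the proposal, either invoke a theorem such as \ref{thm:bptail} (and verify the $0$-monotonicity hypothesis via the FKG inequality, as in \eqref{eqn:dec}), or supply a genuinely complete counting argument that makes quantitative what ``the probabilistic cost dominates the entropy'' means, including the $a=1/2$ case separately.
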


The above statement is similar in spirit to Dobrushin's theorem \cite{Dob72}
concerning the existence of `flat' interfaces in the three-dimensional Ising
model with mixed boundary conditions (see also \cite{GG02} and \cite[Chap.\
7]{G-RC}). The proof utilizes a bound for the tail of the total progeny in a
subcritical branching processes with `stretched-exponential' family-sizes.
(The term `subexponential' is sometimes used in the literature.)
The $\epsilon$ of \eqref{bounds} arises from a certain instance in
the large-deviation theory of heavy-tailed distributions; see the discussion
of Section \ref{sec:subcrith}.

Section \ref{sec:betbnd} contains the basic estimate that leads in Section
\ref{sec:handm} to the proof of Theorem \ref{thm:pcbnd}. Lower bounds for the
critical value $\pL$ are found in Section \ref{sec:pcineq}. The
exponential-decay Theorem \ref{thm:expdec0} is proved in Section
\ref{sec:exp}, followed in Section \ref{sec:decomp} by the proof of Theorem
\ref{thm:indbnd0}. The final Section \ref{sec:subcrith} contains the proof of
Theorem \ref{monument-simple}.

\section{A basic estimate}\label{sec:betbnd}

This section contains a basic estimate (Proposition
\ref{thm:main3}) similar to the principal Lemma 3 of
\cite{DDGHS}, together with a lemma (Lemma \ref{lem2}) that
will be useful in the proof of Theorem \ref{thm:pcbnd}. We
begin by introducing some terminology.

Let $d\ge 2$ and $p=1-q \in [0,1]$. The site percolation model on $\Z^d$ is
defined as usual by letting each site $x\in\Z^d$ be {\dof open} with
probability $p$, or else {\dof closed}, with the states of different sites
independent. The sample space is $\Om=\{0,1\}^{\Z^d}$ where 1 represents
`open', and 0 represents `closed'. The appropriate product probability
measure is written $\P_p$, and expectation as $\E_p$. See \cite{G99} for a
general account of percolation.

As explained in \cite{DDGHS}, the lowest open Lipschitz
function $F$ may be constructed as a blocking surface to
certain paths. Let $e_1,e_2,\dots,e_d\in\Z^d$ be the standard
basis vectors of $\Z^d$. We define a \df{$\di$-path} from $u$
to $v$ to be any finite sequence of distinct sites
$u=x_0,x_1,\ldots,x_k=v$ of $\Z^d$ such that, for each
$i=1,2,\dots,k$,
\begin{equation}\label{lambdap}
x_i-x_{i-1}\in\{\pm e_d\}\cup \{- e_d\pm
    e_j:j=1,\ldots,d-1\}.
\end{equation}
The directed step $w:=x_i-x_{i-1}$ is called: \df{upward} (U)
if $w=e_d$; \df{downward vertical} (DV) if $w=-e_d$;
\df{downward diagonal} (DD) otherwise.

A $\di$-path is called \df{admissible} if
every upward step terminates at a closed site, which is to
say that, for each $i=1,2,\dots,k$,
\begin{equation} \label{defadm}
\text{if }x_i-x_{i-1} = e_d\text{ then $x_i$ is closed.}
\end{equation}

For $u= (u_1,u_2,\dots,u_d) \in \Z^d$, we write $h(u)=u_d$ for
its \df{height}.  Let
$$
\H:=\Z^{d-1}\times\{0\}\subset\Z^d$$
be the
hyperplane of height zero.

A $\di$-path is called a \df{$\dil$-path} if it has no downward
vertical steps. Denote by $u\fr v$ (\resp, $u\frl v$) the event
that there exists an admissible $\di$-path (\resp, $\dil$-path)
from $u$ to $v$. We write $u\rap v$ and $u \rapl v$ for the
corresponding events given in terms of paths using no vertex
$w\in\Z^d$ with $h(w)<0$. More generally, for $A,B\subseteq
\Z^d$ we write $A \fr B$ (and similarly for the other
relations) if $a \fr b$ for some $a\in A$ and $b\in B$.
Similarly, we write `$A \fr B$ in $C$' if such a path exists
using only sites in $C \subseteq \Z^d$.

\begin{prop}\label{thm:main3}
Let $d\ge 2$ and let $q=1-p$ satisfy $\rho:= 8(d-1)q<1$.
Then
\begin{equation}\label{now20}
\sum_{u\in\H:\, \|u\|\ge r} \P_p(0\rapl u)
\le \sum_{n=r}^\oo \binom{2n}n 2^{-2n} \rho^n, \qq r\ge 1.
\end{equation}
\end{prop}

\begin{proof}
Let $a=2d-2$. Any path contributing to the event $0\rapl u$
with $u\in \H$ uses some number $U$ of upward steps and some
number $D$ of downward diagonal steps. Furthermore, $U = D\ge
\|u\|$. Therefore,
\begin{align*}
\sum_{u\in\H:\, \|u\|\ge r} \P_p(0\rapl u)
&\le \sum_{U=D\ge r} \binom{U+D}U q^U a^D\\
&=\sum_{n=r}^\oo \binom {2n}n (qa)^n,
\end{align*}
as required.
\end{proof}

Since $\binom{2n}n \le 2^{2n}$,
it follows from Proposition \ref{thm:main3} when $\rho<1$ that
\begin{equation}\label{now21}
\sum_{u\in\H:\, \|u\|\ge r} \P_p(0\rapl  u) \le \frac{\rho^r}{1-\rho},
\qq r\ge 0.
\end{equation}
A marginally improved upper bound may be derived by using
either Stirling's formula or the local central limit theorem.

Here is a lemma concerning the relationship between $\di$-paths
and $\dil$-paths.
For $S\subseteq \Z^{d-1}\times\Z_0^+$, write
$$
\down S = \{x\in\Z^{d-1}\times\Z_0^+: x=s-ke_d
\text{ for some $s\in S$ and $k\ge 0$}\}.
$$

\begin{lemma}\label{lem2}
For $\om\in\Om$, we have that
$$
\{x\in\Z^{d-1}\times\Z_0^+: 0\rap x\} = \down
\{x\in\Z^{d-1}\times\Z_0^+: 0\rapl x\}.
$$
\end{lemma}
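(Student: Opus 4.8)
The plan is to prove the two set inclusions separately, each amounting to a path-surgery argument. The harder inclusion is ``$\supseteq$'': given a $\dil$-path from $0$ to some $x\in\Z^{d-1}\times\Z_0^+$ together with a vertical descent from $x$ to $x-ke_d$, I must produce an admissible $\di$-path from $0$ to $x-ke_d$. This is essentially immediate: the $\dil$-path is in particular an admissible $\di$-path (it uses only U and DD steps, both permitted for $\di$-paths, and the admissibility condition \eqref{defadm} concerns only U steps, so it is unchanged), and one may simply append $k$ consecutive DV steps $-e_d$ at the end. The only thing to check is that the concatenation visits distinct sites; since the appended sites $x-e_d,\dots,x-ke_d$ strictly decrease in height while the original $\dil$-path lies in $\Z^{d-1}\times\Z_0^+$ and ends at $x$, none of them coincide with earlier vertices unless the $\dil$-path itself dipped below height $h(x)$ — but I should be slightly careful here, so I would instead append only as many DV steps as needed to reach a genuinely new site, or argue that by taking the path to $x-ke_d$ one can always realise it with a self-avoiding path (distinctness of sites in a $\di$-path is part of the definition, but the existence of \emph{some} admissible $\di$-path from $0$ to $x-ke_d$ follows by loop-erasing any admissible walk, since erasing a loop destroys no U-step admissibility constraint). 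This gives $0\rap (x-ke_d)$, hence $\supseteq$.

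For the reverse inclusion ``$\subseteq$'', suppose $0\rap x$ for some $x\in\Z^{d-1}\times\Z_0^+$, via an admissible $\di$-path $\pi=(0=x_0,x_1,\dots,x_k=x)$. I want to show $x\in \down\{y : 0\rapl y\}$, i.e.\ there exists $y$ with $0\rapl y$ and $x=y-je_d$ for some $j\ge 0$; equivalently, there is an admissible $\dil$-path from $0$ to some site lying directly above $x$ (or equal to $x$). The idea is to delete the DV steps from $\pi$. Let $i_1<\dots<i_m$ be the indices of the DV steps of $\pi$. Each such step lowers the height by $1$; removing it and ``closing up the gap'' should yield a $\dil$-path. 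Concretely, I would induct on the number of DV steps: take the \emph{last} DV step, say $x_{j}\to x_{j+1}=x_j-e_d$, and consider the suffix $x_{j+1},x_{j+2},\dots,x_k$, which is a $\dil$-path (no DV steps after the last one). Translate this entire suffix up by $e_d$: it becomes a $\dil$-path from $x_j$ to $x+e_d$. Admissibility is preserved under vertical translation \emph{provided} the translated U-steps still land on closed sites — and here lies the subtlety, since translation changes which sites are visited and the environment $\om$ is fixed.

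The main obstacle, then, is that a naive translation argument does not respect the (fixed) percolation configuration. The resolution, which is the crux of the lemma and presumably why it is stated for a \emph{fixed} $\om$ rather than as a stochastic-domination statement, is that one does not translate the path — one reroutes it. Starting from the admissible $\di$-path $\pi$ to $x$, I process DV steps from the top down. When I encounter a DV step $x_j\to x_j-e_d$ immediately \emph{preceded} by a step into $x_j$, I note the step into $x_j$ was either U (then $x_{j-1}=x_j-e_d$, so the U followed by DV is a backtrack to an already-visited site — impossible in a $\di$-path), DV (handled at a later iteration), or DD, i.e.\ $x_{j-1}=x_j+e_d\mp e_k$. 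In the DD case, replace the two steps $x_{j-1}\to x_j\to x_j-e_d$ by the single DD step $x_{j-1}\to x_{j-1}-e_d\mp e_k$? That is not of the right form. The clean formulation is instead: an admissible $\di$-path from $0$ to $x$ can be decomposed according to its last visit to each height level, and one shows directly that the set on the left is downward-closed and that a $\di$-path reaching $x$ can be ``lifted'' to a $\dil$-path reaching some $y$ over $x$ by deleting each DV step and re-inserting it at the moment the path would otherwise have needed to descend — using that the U-steps of a $\dil$-path only constrain closedness of their \emph{terminal} sites, and these terminal sites are exactly the closed sites the original admissible $\di$-path already certified. I would present this as: \emph{order the U-steps and DV-steps along $\pi$; since $\pi$ returns to $\H$-or-above, the multiset of heights visited is balanced, and deleting DV steps in a last-in-first-out fashion against the U steps produces an admissible $\dil$-path to $x + (\text{net deletion})e_d$,} and then verify distinctness of sites by a loop-erasure at the end. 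Both inclusions together give the claimed equality. I expect the write-up to be a few lines once the right bookkeeping — matching each DV step with a later-or-equal point at which to splice — is fixed; the temptation to invoke translation invariance of the configuration is the trap to avoid.
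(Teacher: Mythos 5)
Your ``$\supseteq$'' direction is fine and essentially matches the paper's one-line observation, with loop-erasure supplied to handle distinctness (a legitimate technicality the paper elides). The gap is in the ``$\subseteq$'' direction, which is the actual content of the lemma. You correctly identify that translating the suffix of the path upward is a trap because the configuration is fixed, but the replacement surgery you then grope for does not work: you look at the step \emph{into} the DV step and try to merge a preceding DD step with the DV step, obtaining the illegal displacement $-2e_d \mp e_k$, and then fall back on a ``LIFO cancellation'' of DV steps against U steps that is never made precise and, as stated, is not an argument. Nothing in your sketch produces a concrete admissible $\dil$-path.

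The idea you are missing is to commute the \emph{last} DV step with the step that \emph{follows} it, after first fixing a \emph{shortest} admissible $\di$-path $\pi$ from $0$ to $x$. Since sites on $\pi$ are distinct, the step after the last DV step $(a,a-e_d)$ cannot be a U step (it would revisit $a$), and cannot be a DV step (this is the last one); so it is a DD step $(a-e_d,\,a-2e_d+\a e_j)$, or there is no following step at all. Swapping the pair to $(a,\,a-e_d+\a e_j)$ then $(a-e_d+\a e_j,\,a-2e_d+\a e_j)$ preserves endpoints, length, and admissibility, because DD and DV steps carry no admissibility constraint. If the new intermediate vertex already lies on $\pi$, loop-erase to get a strictly shorter admissible path, contradicting minimality; similarly, if iterating ever places the DV step immediately before a U step, delete both steps for a shorter path, again a contradiction. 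Iterating drives every DV step to the end, leaving a $\dil$-path followed by DV steps — exactly membership in $\down\{x:0\rapl x\}$. Without this specific local swap (forward, not backward) and the minimality argument that licenses it, your sketch does not close.
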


\begin{proof}
Since every $\dil$-path is a $\di$-path, and $\di$-paths may end with
any number of downward vertical steps without restriction, the right
side is a subset of the left side. It remains to show that the
left side is a subset of the right side.

Let $x\in \Z^{d-1}\times\Z^+_0$ be such that $0\rap x$, and let
$\pi$ be an admissible $\di$-path from $0$ to $x$ of shortest
length. If $\pi$ contains no DV step, then it is a $\dil$-path,
and we are done. Suppose there is a DV step in $\pi$, and
consider the last one, denoted $\ol V$, in the natural order of
the path. Then $\ol V$ is an ordered pair $(x,y)$ of sites of
$\Z^d$ with $y=x-e_d$. Since the sites of the path are
distinct, $\ol V$ is not followed by a U step. Therefore, $\ol
V$ is either at the end of the path $\pi$, or it is followed by
a DD step, which we write as the ordered pair $(y,z)$ with
$z=y-e_d+\a e_j$ for some $\a \in\{-1,1\}$ and $j \in
\{1,2,\dots,d-1\}$. In the latter case, we may interchange $\ol
V$  with this DD step, which is to say that the ordered triple
$(x,y,z)$ in $\pi$ is replaced by $(x,y',z)$ with $y'=y+\a
e_j$. This change does not alter the admissibility of the path
or its endpoints.

A small complication would arise if $y'\in \pi$. If this were
to hold, the site sequence thus obtained would contain a loop,
and we may erase this loop to obtain an admissible path from
$0$ to $x$  with fewer steps than $\pi$. This would contradict
the minimality of the length of $\pi$, whence $y' \notin\pi$.

Proceeding iteratively, the position in the path of the last DV
step may be delayed until: either it is the last step of the
path, or it precedes a U step, denoted $\ol U$. In the latter
case, we may remove $\ol U$ together with the previous DV step
to obtain a new admissible $\di$-path from $0$ to $x$ of
shorter length than $\pi$, a contradiction. Proceeding thus
with every DV step of $\pi$, we arrive at an admissible
$\di$-path from $0$ to $x$ comprising an admissible $\dil$-path
followed by a number of DV steps. The claim follows.
\end{proof}

\section{Hills, mountains, and the proof of Theorem \ref{thm:pcbnd}}
\label{sec:handm}

We describe next the use of Proposition \ref{thm:main3} in the
proof of Theorem~\ref{thm:pcbnd}. For $y\in \H$, the
\emph{hill} $H_y$ is given by
\begin{equation}\label{defhill}
H_y:=\{z\in \Z^d: y\rap z\}.
\end{equation}
Hills combine as follows to form `mountains'.
For $x\in \H$, the \emph{mountain} $M_x$ of $x$
is given by
\begin{equation}\label{defmtn}
M_x=\bigcup\{H_y: y\in\H \text{ such that } x\in H_y\}.
\end{equation}
Let $x\in \H$ and $S \subseteq \Z^d$. The \df{height} of $S$ at
$x$ is defined as
$$
l_x(S)=\sup\{k: x+ke_d \in S\},
$$
where the supremum of the empty set is
interpreted as $0$. The {\dof local height} of the mountain
$M_x$ is defined as its height $l_x(M_x)$ above $x$. By the
definition of $\di$-paths, we have that: either $l_x(M_x)<\oo$
for all $x\in \H$, or $l_x(M_x)=\oo$ for all $x$.

Define the event $I=\bigcap_{x\in \H}\{l_x(M_x)<\oo\}$. The
event $I$ is invariant under the action of translation of
$\Z^d$ by any vector in $\H$, whence it has probability either
$0$ or $1$. By the above, $\P_p(I)=1$ if and only if
$\P_p(l_0(M_0)<\oo)=1$.

Let the function
$F: \Z^{d-1}\to\Z^+\cup\{\oo\}$ be
defined by
\begin{equation}\label{defLip}
F(x) = 1+ l_x(M_x), \qq x\in \H,
\end{equation}
as in \eqref{Fhill}.  By the above discussion, $F$ is
finite-valued if and only if $I$ occurs. It is explained in
\cite{DDGHS} that $F$ is the lowest open Lipschitz function. In
conclusion, the lowest open Lipschitz surface is finite if and
only if $\P_p(I)=1$. In particular,
\begin{equation}\label{pLdef2}
\pL= \inf\{p: \P_p(I)=1\}.
\end{equation}
The random field $(F(x): x \in \H)$ is stationary and ergodic
under the action of translation of $\H$ by any $e_j$ with
$j\in\{1,2,\dots,d-1\}$.

We show in the remainder of this section that $\P_p(I)=1$ under
the condition of the following lemma. For $S \subseteq \Z^d$,
the \df{radius} of $S$ with respect to $0\in\Z^d$ is given by
$$
\rad(S) = \sup\{\|s\|: s\in S\}.
$$

\begin{lemma}\label{lem1}
Let $d \ge 2$, and let $\rho:= 8(d-1)q < 1$. Then
\begin{equation}\label{now25}
\P_p(\rad(H_0)\ge r) \le \frac {\rho^r}{1-\rho},\qq r\ge 1,
\end{equation}
and there exists an absolute constant $c=c(d)$ such that
\begin{equation}\label{now26}
\P_p(\rad(M_0)\ge r) \le cr^{d-1} \frac {\rho^{r/2}}{1-\rho},\qq r\ge 1.
\end{equation}
\end{lemma}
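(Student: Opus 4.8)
The plan is to bound $\P_p(\rad(H_0)\ge r)$ directly from Lemma \ref{lem2} and Proposition \ref{thm:main3}, and then obtain the mountain estimate \eqref{now26} by a union bound over the hills that can contribute to $M_0$. For the first inequality, observe that $\rad(H_0)\ge r$ means there exists $z\in\Z^{d-1}\times\Z^+_0$ with $0\rap z$ and $\|z\|\ge r$. By Lemma \ref{lem2}, any such $z$ lies in $\down\{x: 0\rapl x\}$, so $z = s - ke_d$ for some $s$ with $0\rapl s$ and some $k\ge 0$. Since the downward step $-e_d$ does not change the first $d-1$ coordinates, and the $\ell^1$-norm of $z$ is at least that of its spatial projection, we get $\|s\|\ge\|z\|\ge r$ when $s\in\H$; more carefully, one notes that reaching height $0$ via a $\dil$-path and then stepping down keeps us in $\down$, and the relevant bound is really about the spatial extent, which is already controlled by the height-zero slice. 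The cleanest route is: every $z$ with $0\rap z$ has spatial part equal to the spatial part of some $s\in\H$ with $0\rapl s$ (follow the $\dil$-path to height $0$), and $\|z\|\ge\|s\|$ fails in general, so instead bound $\|z\| \le \|s\| + h(s) - h(z)$ and use that $U=D$ forces $h(s)$ comparable to the spatial displacement — but the slickest statement is simply $\{\rad(H_0)\ge r\}\subseteq\bigcup_{u\in\H:\|u\|\ge r'}\{0\rapl u\}$ for an appropriate $r'$, after which \eqref{now21} gives \eqref{now25}. I expect the bookkeeping to reduce exactly to \eqref{now21} with $r'=r$, since a point of height $h$ and spatial norm $m$ on a $\di$-path satisfies $m+h\le\|u\|$ for the corresponding height-zero endpoint $u$ only after accounting for DV steps, and the admissible reduction of Lemma \ref{lem2} is what makes this work.

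For the mountain bound \eqref{now26}, recall $M_0=\bigcup\{H_y: y\in\H,\ 0\in H_y\}$. If $\rad(M_0)\ge r$, then there is a point $z$ with $\|z\|\ge r$ lying in some $H_y$ with $y\in\H$ and $0\in H_y$, i.e. $y\rap 0$ and $y\rap z$. Translating by $-y$, the event $y\rap 0$ has the same probability as $0\rap -y$, so $\P_p(y\rap 0)=\P_p(\rad(H_0)\ge\|y\|)$ in the relevant regime, and by \eqref{now25} this is at most $\rho^{\|y\|}/(1-\rho)$. Meanwhile, given such a $y$, the event that $H_y$ reaches spatial distance $r$ from the origin forces $\rad(H_y)\ge r-\|y\|$ (triangle inequality), again with probability at most $\rho^{(r-\|y\|)}/(1-\rho)$ after translation. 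These two events are not independent — they share the site $y$ and possibly overlapping paths — so I would instead use a single union bound:
\begin{equation*}
\P_p(\rad(M_0)\ge r)\le\sum_{y\in\H}\P_p(y\rap 0,\ \rad(H_y)\ge r).
\end{equation*}
For each fixed $y$, either $\|y\|\ge r/2$, in which case $\P_p(y\rap 0)\le\rho^{r/2}/(1-\rho)$ by \eqref{now25} and translation; or $\|y\|<r/2$, in which case $\{y\rap 0,\ \rad(H_y)\ge r\}\subseteq\{\rad(H_y)\ge r/2\}$ since a point at spatial distance $r$ from $0$ is at distance $\ge r/2$ from $y$, giving the same bound $\rho^{r/2}/(1-\rho)$. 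The number of $y\in\H$ for which the summand is nonzero is at most the number of $y$ with $\|y\|<r$ in the first case — no wait, that is infinite. The point is that in the regime $\|y\|\ge r/2$ we must also keep a summable tail; so I split $\sum_{\|y\|\ge r/2}\P_p(y\rap 0)$ and bound it by $\sum_{m\ge r/2}(\text{number of }y\in\H\text{ with }\|y\|=m)\cdot\rho^m/(1-\rho)$, where the count is $O(m^{d-2})$, yielding a convergent sum dominated by a constant times $r^{d-2}\rho^{r/2}/(1-\rho)$; and in the regime $\|y\|<r/2$ the count of admissible $y$ is at most $O(r^{d-1})$ and each contributes $\le\rho^{r/2}/(1-\rho)$. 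Combining, $\P_p(\rad(M_0)\ge r)\le cr^{d-1}\rho^{r/2}/(1-\rho)$, which is \eqref{now26}.

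The main obstacle is the first step: extracting \eqref{now25} from \eqref{now21} requires correctly relating the spatial radius of $H_0$ (which involves $\di$-paths with DV steps, and points at positive height) to the height-zero quantity $\sum_{u\in\H,\|u\|\ge r}\P_p(0\rapl u)$. One must use Lemma \ref{lem2} to replace the $\di$-path to a far-away point $z$ by a $\dil$-path to a height-zero point $u$ whose $\ell^1$-norm is at least $\|z\|$ — this holds because the $\dil$-path uses only U and DD steps, and by $U=D$ the height gained equals the number of DD steps, so dropping down from the top of the $\dil$-path to height $0$ and then possibly further does not decrease the spatial norm below what the DD steps already produced; care is needed because $\rad$ is measured in $\Z^d$, not in $\H$, so a point of height $h$ and spatial norm $m$ has norm $m+h$, and one checks $m+h\le\|u\|$ for the height-zero endpoint $u$ of the corresponding $\dil$-path. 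Once this geometric comparison is pinned down, everything else is a routine application of \eqref{now21} and counting lattice points in spheres of $\H$.
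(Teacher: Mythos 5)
Your outline for \eqref{now26} matches the paper's and is sound: a union bound over $y\in\H$ for the event $\{0\in H_y,\ \rad(H_y)\ge r\}$, split at $\|y\|=r/2$, with the near regime controlled by the triangle inequality (forcing a hill of radius $\ge r/2$ about $y$) and the far regime by $\P_p(y\rap 0)\le\rho^{\|y\|}/(1-\rho)$ together with a sphere count. The paper instead feeds the far sum directly into the already-summed inequality \eqref{now21}, which avoids the lattice count, but your version yields the same order $cr^{d-1}\rho^{r/2}/(1-\rho)$.

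The gap is in \eqref{now25}. You correctly identify the target inclusion $\{\rad(H_0)\ge r\}\subseteq\bigcup_{u\in\H:\|u\|\ge r}\{0\rapl u\}$, but you never establish it, and the justification you offer does not hold up. Lemma \ref{lem2} gives $H_0=\down H_0^\lam$, hence $\rad(H_0)=\rad(H_0^\lam)$ (downward closure inside $\Z^{d-1}\times\Z_0^+$ cannot increase the $\ell^1$-norm). So from $\rad(H_0)\ge r$ you obtain some $s$ with $0\rapl s$ and $\|s\|\ge r$; but $s$ generally sits at positive height, and Lemma \ref{lem2} does \emph{not} produce a $\dil$-path ending in $\H$ — it only removes DV steps. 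The missing step is to \emph{append} $h(s)$ further DD steps to the $\dil$-path, each in a direction $-e_d\pm e_j$ chosen with the sign that increases $|s_j|$; this lands at $v\in\H$ with $\|v\|=\|s'\|+h(s)=\|s\|\ge r$, and the extended path is still admissible because DD steps carry no admissibility constraint. Your stated reason — ``by $U=D$ the height gained equals the number of DD steps'' — is incorrect: for a $\dil$-path from $0$ to a point at height $h$ one has $U-D=h$, so $U=D$ holds only when the endpoint is already in $\H$, and in no case does the height gained equal the DD-count. Without the explicit appending of norm-increasing DD steps, the reduction to the height-zero sum in \eqref{now21} remains unproved; once that single step is supplied, the rest goes through exactly as you anticipate.
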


\begin{proof}[Proof of Theorem \ref{thm:pcbnd}]
Since $l_0(M_0) \le \rad(M_0)$, we have by \eqref{now26} that $l_0(M_0)<\oo$
\as\ when $\rho<1$. By \eqref{pLdef2}, $\pL\le 1-1/[8(d-1)]$.
\end{proof}

The following corollary of Lemma \ref{lem1} will be used in Section
\ref{sec:subcrith}. The \df{footprint} $L(S)$ of a subset $S \subseteq \Z^d$
is its projection onto $\H$:
\begin{equation}\label{footprint}
L(S) := \{(s_1,s_2,\dots,s_{d-1},0): (s_1,s_2,\dots,s_d)\in S\}.
\end{equation}

\begin{cor}\label{cor:subexp}
Let $d \ge 2$, $p=1-q \in(0,1)$, and $\rho:= 8(d-1)q$.
\begin{letlist}
\item There exists $\a=\a(d,p)<\oo$ such that
$$
\P_p(|L(H_0)|\ge n) \ge \exp(-\a n^{1/(d-1)}),\qq n \ge 2.
$$
\item There exists $\b=\b(d,p)$ satisfying $\b(d,p)>0$ when $\rho<1$,
such that
$$
\P_p(|L(M_0)|\ge n) \le \exp(-\b n^{1/(d-1)}),\qq n \ge 2.
$$
\end{letlist}
\end{cor}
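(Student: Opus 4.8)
The plan is to derive both bounds from Lemma \ref{lem1}, which controls the radius of $H_0$ and of $M_0$, by converting radius information into footprint-cardinality information. For the upper bound (b), the key observation is that the footprint $L(M_0)$ is a subset of the ball $\{x \in \H : \|x\| \le \rad(M_0)\}$ in $\Z^{d-1}$, which has cardinality at most $c' r^{d-1}$ when $\rad(M_0) \le r$, for an absolute constant $c' = c'(d)$. Hence $\{|L(M_0)| \ge n\} \subseteq \{\rad(M_0) \ge (n/c')^{1/(d-1)}\}$, and \eqref{now26} gives
\begin{equation*}
\P_p(|L(M_0)| \ge n) \le c (n/c')^{(d-1)/(d-1)} \frac{\rho^{(n/c')^{1/(d-1)}/2}}{1-\rho}
= \frac{(c/c')\, n\, \rho^{\frac12 (n/c')^{1/(d-1)}}}{1-\rho}.
\end{equation*}
Since $\rho < 1$, the exponential factor $\rho^{\frac12(n/c')^{1/(d-1)}}$ dominates the polynomial prefactor $n$, so for a suitable $\b = \b(d,p) > 0$ (absorbing constants and shrinking slightly to kill the prefactor) we obtain $\P_p(|L(M_0)| \ge n) \le \exp(-\b n^{1/(d-1)})$ for all $n \ge 2$. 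One checks that $\b$ can be taken strictly positive precisely when $\rho < 1$.

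For the lower bound (a), I would exhibit an explicit favourable configuration of open and closed sites that forces $|L(H_0)|$ to be large, and estimate its probability from below. Recall $H_0 = \{z : 0 \rap z\}$ consists of all sites reachable from the origin by admissible $\di$-paths staying in $h \ge 0$. The simplest construction: fix $m \sim n^{1/(d-1)}$; if all sites in the box $B_m := \{-m, \dots, m\}^{d-1} \times \{1\}$ are closed while the origin is open, then from $0$ one can take an upward step $U$ (terminating at the closed site $e_d$, which is admissible) — wait, more carefully, one uses that the hill is generated by downward-diagonal and upward steps, so I would instead arrange a "pyramid" of closed sites at heights $1, 2, \dots, m$ so that the admissible $\di$-path structure reaches every site in $B_m \times \{1\}$, forcing $L(H_0) \supseteq B_m$ and hence $|L(H_0)| \ge (2m+1)^{d-1} \ge n$. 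The probability that the $O(m^{d-1})$ relevant sites are all closed (and the origin open) is at least $p\, q^{c'' m^{d-1}} = p\, q^{c'' (2m+1)^{d-1}} \ge \exp(-\a n^{1/(d-1)} \cdot (\text{const})) \cdot p$, for $\a = \a(d,p)$ depending on $\log(1/q)$; adjusting constants gives the claimed bound $\P_p(|L(H_0)| \ge n) \ge \exp(-\a n^{1/(d-1)})$ for $n \ge 2$, with $\a < \oo$ for every $p \in (0,1)$.

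The main obstacle is the lower bound (a): I need to pin down precisely which configuration of closed sites forces a prescribed set to lie in $H_0$ via \emph{admissible} $\di$-paths respecting the step set \eqref{lambdap} and the constraint \eqref{defadm} that every $U$-step lands on a closed site. The clean choice is to make a solid block of closed sites filling a triangular region: heights $k = 1, \dots, m$ with footprint $\{\|x\| \le m-k+1\}$ at height $k$ — this is a "filled pyramid." Then an admissible $\di$-path from $0$ can climb via alternating $U$ and $DD$ steps (each $U$-step landing inside the closed block, hence admissible) to reach any target in the footprint of height $1$, so $L(H_0)$ contains a $(d-1)$-dimensional ball of radius $\sim m$. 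Counting the closed sites in this pyramid gives $O(m^d)$ — careful, that would yield a worse exponent $n^{d/(d-1)}$. To get the sharp exponent $n^{1/(d-1)}$ I should instead make \emph{only} the single layer at height $1$ closed over a box of side $2m+1$ (that is $O(m^{d-1})$ sites), and verify directly that admissible $\di$-paths confined to heights $0$ and $1$ suffice to reach the whole layer: from $0$, take $U$ to $e_d$ (admissible since $e_d$ is closed), then repeated $DD$ steps at height $1$? — but $DD$ steps decrease height, landing at height $0$. The correct mechanism is that from height $1$ one takes a $U$ step to height $2$ then a $DD$ back to height $1$ at a neighbouring footprint site; this needs height-$2$ sites closed too. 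Resolving this trade-off — minimizing the number of sites we must force closed while still spanning a footprint of the required size — is the crux, and I expect the right answer uses $O(m^{d-1})$ closed sites arranged in two layers (heights $1$ and $2$), giving the sought exponent; I would make this precise by induction on $m$.
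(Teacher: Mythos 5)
Your proof of part (b) is essentially the paper's: bound $|L(M_0)|$ by a constant times $\rad(M_0)^{d-1}$ and feed the result into inequality \eqref{now26}. That part is fine.

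Part (a), however, has a genuine gap, and the final strategy you settle on would not give the stated bound. You correctly reject the filled pyramid ($O(m^d)$ sites) because it yields too weak an exponent, but your fallback — closing $O(m^{d-1})$ sites in one or two layers at heights $1$ and $2$ over a box of side $m$ — has the same defect. Closing $O(m^{d-1})$ sites costs probability $q^{O(m^{d-1})} = \exp(-O(m^{d-1}))$, while the footprint gained is also of order $m^{d-1}$; setting $n \sim m^{d-1}$ gives $\P_p(|L(H_0)|\ge n) \ge \exp(-O(n))$, which is a strictly weaker lower bound than the claimed $\exp(-\a n^{1/(d-1)})$ for $d\ge 3$. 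So the proposed construction, even if carried out, does not prove the corollary.

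The construction the paper uses is much cheaper and is the piece you are missing: close only the vertical column $\{k e_d : 1\le k\le m\}$ — just $m$ sites. From $0$, one may ascend via $U$-steps along this column (each $U$-step lands on a closed site, hence admissible), and then descend from height $k\le m$ via $DD$-steps, which are subject to no admissibility constraint at all; descending along a geodesic from $k e_d$ reaches any $x\in\Z^{d-1}$ with $\|x\|\le k$ at height $0$, and since each $DD$-step strictly decreases the height, the path is automatically self-avoiding. Thus $L(H_0)$ contains the $\ell^1$-ball of radius $m$ in $\Z^{d-1}$, giving $|L(H_0)|\ge c m^{d-1}$, with probability at least $q^m$. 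Taking $m \sim (n/c)^{1/(d-1)}$ yields the desired $\exp(-\a n^{1/(d-1)})$. The key point you did not exploit is the asymmetry of the admissibility condition: only $U$-steps are constrained, so only the ascent (one column) has to be paid for, while the fan-out into the full ball comes for free via $DD$-steps.
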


\begin{proof}[Proof of Lemma \ref{lem1}]
Let $\rho<1$ and $r \ge 1$. Suppose $0\rapl u$ where
$u\in \Z^{d-1}\times \Z^+_0$ and $\|u\|\ge r$.
By considering
all sites $v$ such that there is a $\dil$-path from $u$ to $v$
using downward diagonal steps only, there must exist $v\in\H$ with
$0\rapl v$ and $\|v\|\ge r$. Therefore, by \eqref{now21},
$$
\P_p(\rad(H_0^\lam)\ge r) \le
\sum_{u\in\H:\, \|u\|\ge r} \P_p(0\rapl  u) \le \frac{\rho^r}{1-\rho},
$$
where
$$
H_y^\lam := \{z\in \Z^d: y\rapl z\},\qq y\in \H.
$$
By Lemma \ref{lem2}, $H_0 = \down H_0^\lam$, and in particular
$\rad(H_0)=\rad(H_0^\lam)$. Inequality \eqref{now25} follows.

By the definition of $M_0$ and the triangle inequality,
$$
\P_p(\rad(M_0) \ge r) \le \sum_{y\in \H}
\P_p\bigl(0\in H_y,\, \rad(H_y)\ge r-\|y\|\bigr).
$$
The last sum equals
$$
\sum_{y\in \H} \P_p\bigl(y\in H_0,\, \rad(H_0) \ge r-\|y\|\bigr),
$$
which we split into two sums depending on whether or not
$\|y\|\le r/2$. The first such sum is no larger than
$cr^{d-1}\P_p(\rad(H_0)\ge r/2)$ for some constant $c$. By
Lemma \ref{lem2} and \eqref{now21}, the second satisfies
\begin{align*}
\sum_{y\in \H:\, \|y\|> r/2} \P_p(y\in H_0)
&= \sum_{y\in \H:\, \|y\|> r/2} \P_p(0 \rapl y)\\
&\le \frac{\rho^{r/2}}{1-\rho},
\end{align*}
as required.
\end{proof}

\begin{proof}[Proof of Corollary \ref{cor:subexp}]
(a) There exists $c>0$ such that,
if every site in $\{ke_d: 1\le k\le m\}$ is
closed, then $|L(H_0)| \ge cm^{d-1}$.
Therefore,
\begin{equation}\label{now50}
\P_p(|L(H_0)|\ge cm^{d-1}) \ge q^m,
\end{equation}
and the first claim follows.

\noindent(b)
There exists $c >0$ such that $|L(M_0)| \le c\,\rad(M_0)^{d-1}$,
and the second claim follows by \eqref{now26}.
\end{proof}

\section{Inequalities for Lipschitz critical points}\label{sec:pcineq}

By Theorem \ref{thm:pcbnd}, we have $1-\pL(d) \ge 1/[8(d-1)]$.
In this section, we derive further results concerning the
values $\pL(d)$.   In particular we prove a lower bound for
$\pL$ that implies that the correct order of magnitude of
$1-\pL(d)$ is indeed $1/d$ in the limit of large $d$.

Consider the hypercubic lattice $\LL^d$ with vertex-set $\Z^d$.  Let $\pi$ be
a (finite or infinite) directed self-avoiding path of $\LL^d$ with vertices
$x_1,x_2,\ldots$. We call $\pi$ \df{acceptable} if it contains no upward
steps, i.e., if $x_{i+1} - x_i \neq e_d$ for all $i$.

Consider site percolation on $\Z^d$. An acceptable path is
called \df{open} (\resp, \df{closed}) if all of its sites are open
(\resp, closed). Let $\pca(d)$ be
the critical probability for the existence of an infinite open
acceptable path from the origin.

\begin{prop}\label{thm:pcdbond} The sequence $(\pL(d): d\ge 2)$
is non-decreasing and satisfies $\pL(d) \ge 1-\pca(d)$.
\end{prop}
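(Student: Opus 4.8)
The plan is to prove the two assertions separately, starting with monotonicity in $d$ and then the comparison with $\pca(d)$.

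\medskip

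\noindent\textbf{Monotonicity.} For the claim that $\pL(d)$ is non-decreasing, the natural approach is to exhibit a projection that turns an open Lipschitz surface in $\Z^{d+1}$ into one in $\Z^{d}$. Suppose $p>\pL(d+1)$, so that with probability one there is an open Lipschitz function $F:\Z^{d}\to\Z^+$ for the $(d+1)$-dimensional model. Fix a coordinate hyperplane: restrict attention to configurations on the slab $\{x\in\Z^{d+1}:x_{d-1}=0\}$ (say), which is a copy of $\Z^{d}$ after deleting one coordinate. One would argue that the restriction $x\mapsto F(x_1,\dots,x_{d-2},0)$, viewed as a function on $\Z^{d-1}$ with the remaining horizontal coordinates, is Lipschitz in the $d$-dimensional sense (the Lipschitz condition only becomes easier when we look at fewer neighbours), and that the sites it picks out are exactly the sites of the lower-dimensional percolation induced on that slab, which are open with the same probability $p$. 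Hence $\P_p(\Lip_d)=1$, giving $p\ge\pL(d)$ and therefore $\pL(d)\le\pL(d+1)$. The point to be careful about is matching up the lattice neighbourhoods correctly so that ``Lipschitz in $\Z^{d}$'' really does follow from ``Lipschitz in $\Z^{d+1}$'' under the restriction, and checking that the induced percolation on the slab is genuinely i.i.d.\ site percolation with parameter $p$ (which it is, since we are just reading off a sub-collection of the independent site variables).

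\medskip

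\noindent\textbf{The bound $\pL(d)\ge 1-\pca(d)$.} The key is the blocking-path description of $F$ from Section~\ref{sec:handm}: the lowest open Lipschitz surface is finite iff the event $I$ occurs, and $I$ fails precisely when $l_0(M_0)=\infty$, i.e.\ when there is an infinite admissible $\di$-path from some $y\in\H$ through the origin. So I would show the contrapositive: if $p$ is such that $\P_p(\Lip)=1$ (equivalently $\P_p(I)=1$), then there is no infinite open acceptable path from the origin for the \emph{closed} sites, i.e.\ $q=1-p<\pca(d)$, which rearranges to $p>1-\pca(d)$; taking infima then gives $\pL(d)\ge 1-\pca(d)$. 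The bridge between the two notions is this: an admissible $\di$-path has all its \emph{upward} steps at closed sites, and one wants to relate this to an acceptable path (one with \emph{no} upward steps) all of whose sites are closed. Given an infinite acceptable closed path in $\LL^d$ from the origin, one produces from it an infinite admissible $\di$-path that forces $l_0(M_0)=\infty$; conversely, an infinite admissible $\di$-path contains infinitely many steps and, because the vertical coordinate is bounded below along it (it stays in heights $\ge 0$ once we are talking about $M_0$), must have its downward and upward steps roughly balanced, so it contains long stretches that project to acceptable paths — but the cleaner direction is to start from the acceptable closed path. Concretely: if there is an infinite open acceptable path in the \emph{closed}-site percolation from $0$, then the closed sites contain an infinite ``descending'' structure which can be used as the upward-step targets of an admissible $\di$-path of unbounded height, contradicting $I$.

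\medskip

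\noindent\textbf{Where the difficulty lies.} The monotonicity half is essentially bookkeeping about neighbourhoods and should go through with a short, careful argument. The main obstacle is making the equivalence in the second part precise: one must translate between ``admissible $\di$-path of unbounded height through $0$'' and ``infinite acceptable all-closed path from $0$'' without losing or gaining a factor, and in particular one must check that the self-avoiding, no-upward-step acceptable path really does yield an admissible $\di$-path (upward steps landing on closed sites) whose local height at $0$ is infinite, and that this is the right event to contradict $\P_p(I)=1$. I would expect to spend most of the effort identifying exactly which sites of the acceptable path are used as the closed ``upward targets'' and verifying the $\di$-path step-type constraints \eqref{lambdap}--\eqref{defadm} along the resulting path, after which the probabilistic conclusion ($q<\pca(d)$ hence $p>1-\pca(d)$, then infimum) is immediate from \eqref{pLdef2}.
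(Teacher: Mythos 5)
Your monotonicity argument matches the paper's: restrict an open Lipschitz function $F:\Z^{d-1}\to\Z^+$ (for the $d$-dimensional model) to a coordinate hyperplane of its domain, obtaining an open Lipschitz function for the $(d-1)$-dimensional model, so $\pL(d-1)\le\pL(d)$. This half is fine.

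For the second assertion your guiding idea is the right one---relate admissible $\di$-paths to acceptable closed paths via the observation that the closed sites of the latter can serve as the upward-step targets of the former---but your geometry points the wrong way and leaves the key transformation unresolved. If you start the acceptable closed path at the origin and follow it, it descends out of $\Z^{d-1}\times\Z^+_0$ into negative heights, where it says nothing about $l_0(M_0)$. The move that closes the argument is to start the acceptable closed path at $ne_d$, follow it down to its \emph{first} hitting point $x\in\H$, and \emph{reverse} that finite segment. The reversed walk $x=z_0,z_1,\dots,z_m=ne_d$ has steps $+e_d$ or $\pm e_j$ with $j<d$; the horizontal ones are not legal $\di$-path steps, so you must replace each horizontal step $z_{i-1}\to z_i$ by the DD step $z_{i-1}\to z_i-e_d$ followed by the U step $z_i-e_d\to z_i$. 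Every U step then lands on some $z_i$, a site of the original acceptable closed path, hence closed, so the walk is admissible; loop-erasure gives a genuine $\di$-path; and since heights are non-increasing along the forward acceptable path (hence non-decreasing along the reversal, and equal to $0$ only at $z_0=x$), all visited sites have height $\ge 0$. This yields $x\rap ne_d$, and descending vertically from $ne_d$ also gives $x\rap 0$, so $0\in H_x$ and $ne_d\in H_x\subseteq M_0$, whence $l_0(M_0)\ge n$ and $F(0)>n$. You must also justify the step you use implicitly: that on the event that $ne_d$ starts an \emph{infinite} acceptable closed path (probability $\theta^\downarrow(q)>0$ when $q>\pca$), there is almost surely a \emph{finite} acceptable closed path from $ne_d$ to $\H$; the paper invokes a standard percolation argument here, which you should supply or cite. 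With those two gaps filled, the bookkeeping $q>\pca\Rightarrow\P_p(F(0)=\oo)>0\Rightarrow p\le\pL$, followed by taking the supremum over such $p$, finishes the proof.
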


\begin{figure}
\centering
\begin{picture}(0,0)\put(0,73.5){$0$}\end{picture}
\includegraphics[width=0.25\textwidth]{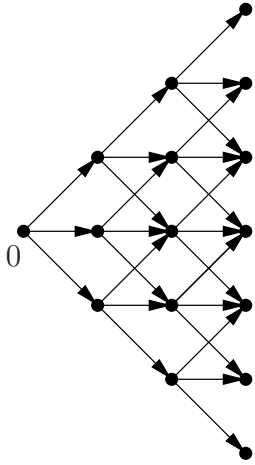}
\caption{Part of the lattice $\Lalt$ of \cite{GH}, obtained by
adding oriented edges to $\Z^2$.}\label{fig:2alt}
\end{figure}

Let $\pc(d)$ be the critical probability of site percolation on
$\LL^d$,  and let $\pco(d)$ be the critical probability of the
oriented site percolation process on $\LL^d$ in which every
edge is oriented in the direction of increasing coordinate
direction. It is elementary by graph inclusion that
$$
\pc(d) \le \pca(d) \le \min\{\pc(d-1),\pco(d)\}.
$$
Several proofs are known that $2d\pc(d) \to 1$ as $d \to \oo$
(see \cite[p.~30]{G99}; indeed the lace expansion permits an
expansion of $\pc(d)$ in inverse powers of $2d$). Hence, by
Proposition \ref{thm:pcdbond},
$$
1-\pL(d) \le \frac{1+\o(1)}{2d}\qq\text{as } d \to \oo.
$$

The value of $\pL(2)$ may be expressed as the critical value of
a certain percolation model. Consider the oriented graph
$\Lalt$ obtained from $\Z^2$ by placing an oriented bond from
$u$ to $v$ if and only if $v-u \in\{e_1, e_1\pm e_2\}$. This
graph was used in \cite{GH}, and is illustrated in Figure
\ref{fig:2alt}. Let $\palt$ be the critical probability of
oriented site percolation on $\Lalt$.  It is shown in \cite[Thm
5.1]{PF05} that $\palt\ge \frac12$. It is elementary that
$\palt\le \pco(2)$, and it was proved in \cite{BBS} that
$\pco(2) \le  0.7491$ (see also \cite{Lig95}). In summary,
$$
\tfrac12 \le \palt \le \pco(2) \le 0.7491.
$$
Simulations in \cite{PF05} indicate that $\palt \approx 0.535$.

\begin{prop}\label{thm:2dexact}
We have that $\pL(2) = \palt$.
\end{prop}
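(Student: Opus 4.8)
The plan is to show that the critical value $\pL(2)$ for Lipschitz percolation on $\Z^2$ coincides with the critical value $\palt$ for oriented site percolation on the graph $\Lalt$, by exhibiting a direct correspondence between the relevant percolative structures. The key object is the mountain $M_0$ and its local height $l_0(M_0)$, as developed in Section~\ref{sec:handm}: by \eqref{pLdef2} we have $\pL(2) = \inf\{p: \P_p(l_0(M_0)<\oo) = 1\}$. When $d=2$, a $\di$-path uses steps in $\{\pm e_2\} \cup \{-e_2 \pm e_1\}$, and by Lemma~\ref{lem2} the hill $H_0 = \down H_0^\lam$ is determined by the $\lam$-paths, whose steps lie in $\{e_2, -e_2+e_1, -e_2-e_1\}$. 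First I would analyze the event $0 \rapl u$ for $u \in \H$: an admissible $\lam$-path goes up only through closed sites and moves diagonally down-left or down-right. Reflecting the vertical direction (reading the path from its endpoint back to $0$, or reversing the roles of up and down), such a path becomes one that moves in directions $\{-e_2\}$ (now "down", formerly "up") together with $\{e_2 \pm e_1\}$, i.e.\ exactly the edge-set of $\Lalt$ after a linear change of coordinates; the admissibility constraint (closed sites at the tops of upward steps) becomes the requirement that the corresponding sites be closed, so the path is a \emph{closed} oriented path in $\Lalt$.

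The heart of the argument is a planar duality: in two dimensions, the Lipschitz surface is finite (equivalently $l_0(M_0) < \oo$) if and only if there is \emph{no} bi-infinite admissible $\lam$-path, i.e.\ no infinite closed oriented path in $\Lalt$ escaping to infinity through the relevant half-plane. I would make this precise by showing that $l_0(M_0) = \oo$ forces the existence of an unbounded admissible $\di$-path from the region near the origin, which by Lemma~\ref{lem2} and the reduction above corresponds to an infinite closed oriented path in $\Lalt$; conversely an infinite closed $\Lalt$-path produces arbitrarily high $\di$-paths blocking the surface. Since site percolation is self-dual in the sense that closed sites have density $q=1-p$, the existence of an infinite closed oriented path in $\Lalt$ at parameter $p$ is governed by whether $q > 1-\palt$, equivalently $p < \palt$. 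This pins down $\inf\{p : \text{no infinite closed } \Lalt\text{-path a.s.}\} = \palt$, and matching this with \eqref{pLdef2} gives $\pL(2) = \palt$.

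The main obstacle, and the step requiring the most care, is getting the geometric/topological correspondence exactly right at the boundary hyperplane $\H$ and the positivity constraint $F \ge 1$: the local cover functions are required to take values in $\Z^+$ (strictly positive), mountains are built from hills $H_y$ with $y \in \H$ and the height is measured \emph{above} $x$, so one must be scrupulous about whether paths are confined to the upper half-space, how they interact with height zero, and whether the "infinite path" on the $\Lalt$ side is genuinely one-sided-infinite or bi-infinite. I expect one needs to check that the reflection/reversal sends the half-space condition for $\rap$ precisely to the natural oriented-percolation setup on $\Lalt$ (where the relevant paths head in the direction of increasing first coordinate), and that no spurious discrepancy of $\pm 1$ in heights creeps in. A secondary technical point is the passage from "$\rad(M_0) < \oo$ a.s." to "no infinite closed $\Lalt$-path a.s."\ in both directions, which uses ergodicity (the $0/1$-laws already established for $I$ and for the $\Lalt$-percolation event) together with the fact that a single unbounded admissible path anywhere is enough, by translation-invariance, to destroy the surface. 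Once these are handled, the identification of critical points is immediate from the definitions.
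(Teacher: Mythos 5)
Your approach diverges fundamentally from the paper's, and as sketched it contains a gap that breaks the argument. The paper's proof works \emph{directly with the primal objects}: for $d=2$ an open Lipschitz function $F:\Z\to\Z^+$ is literally a bi-infinite open oriented path of $\Lalt$ contained in the upper half-plane, since the step from $(x,F(x))$ to $(x+1,F(x+1))$ lies in $\{e_1,e_1\pm e_2\}$. One direction is then immediate: if $p>\pL$, an open Lipschitz surface contains a one-sided infinite open $\Lalt$-path from a site on the $2$-axis, so $p\ge\palt$. For the converse, the block construction of \cite{GH} gives, for $p>\palt$, a strictly positive probability that $(0,1)$ starts an infinite open $\Lalt$-path avoiding the lower half-plane; reflecting in the $2$-axis and gluing at $(0,1)$ yields a bi-infinite such path with positive probability, hence an open Lipschitz function exists with positive probability, hence a.s.\ by the zero--one law, so $\pL\le\palt$. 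No mountains, hills, admissible $\di$-paths, or dual closed-path percolation appear.

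Your route through mountains and $\lam$-paths breaks down at the claimed correspondence with \emph{closed} oriented paths of $\Lalt$. Two things go wrong. First, in an admissible $\lam$-path only the sites at the tops of upward steps are required to be closed; the endpoints of DD steps carry no constraint, so even after reversal/reflection the path is not an all-closed path, and reading it as one grossly overstates the admissibility condition. Second, the $\lam$-step set $\{e_2,-e_2+e_1,-e_2-e_1\}$ (and its vertical reflection $\{-e_2,e_2+e_1,e_2-e_1\}$) is a nondegenerate triangle of vectors, whereas the $\Lalt$ step set $\{e_1,e_1+e_2,e_1-e_2\}$ is degenerate (every step advances the first coordinate by exactly one); no linear change of coordinates carries one to the other, so the asserted identification with $\Lalt$ is false. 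Finally, even granting such a correspondence, the threshold for an infinite closed oriented $\Lalt$-path at closed-site density $q$ is $q>\palt$, that is $p<1-\palt$, not $p<\palt$ as you write; since the paper records $\palt\ge\tfrac12$ (simulations suggest $\palt\approx 0.535$), your duality would yield $\pL(2)=1-\palt\ne\palt$. The dual picture you are reaching for is closer to Proposition~\ref{thm:pcdbond}, which uses closed \emph{acceptable} paths (steps $\{\pm e_1,-e_2\}$, not $\Lalt$-steps) and gives only the one-sided bound $\pL\ge 1-\pca$; the exact identity in $d=2$ comes from the primal identification, not from a dual one.
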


The equation $\pL(2) =\palt \ge \frac12$ has been noted in
independent work of Berenguer (personal communication).

\begin{proof}[Proof of Proposition \ref{thm:pcdbond}]
Let $F:\Z^{d-1}\to \Z^+$ be Lipschitz. The restriction
$G:\Z^{d-2}\to\Z^+$
given by
$$
G(x_1,x_2,\dots,x_{d-2}) := F(x_1,x_2,\dots,x_{d-2},0)
$$
is Lipschitz also, and the monotonicity of $\pL(d)$ follows.

Let $q=1-p$ satisfy $q>\pca(d)$. Suppose there exists an
acceptable closed path from $ne_d$ to some site in $\H$. Fix
such a path, and let $x$ be its earliest site lying in
$\H$.  Then we see that $x \rap
ne_d$, and hence $0 \in H_x$ and $l_0(M_0) \ge n$. By
\eqref{defLip}, $F(0) > n$.

On the other hand, by a standard argument of percolation
theory, on the event that there exists an infinite acceptable closed
path starting from $ne_d$,  there exists \as\ an
acceptable closed path from $ne_d$ to some site in $\H$.  Hence
$$
\P_p(F(0)>n) \ge \theta^\downarrow(q) > 0,
$$
where $\theta^\downarrow(q)$ is the probability that the origin
lies in an infinite acceptable closed path. Thus $p \le \pL$,
and hence $1-\pca \le \pL$.
\end{proof}

\begin{proof}[Proof of Proposition \ref{thm:2dexact}]
This is similar to part of the proof of \cite[Thm~6]{GH10a};
see also \cite[Lemma 7]{GH10a}. Let $d=2$. If $p>\pL$, there
exists \as\ a site $z$ on the $2$-coordinate axis of $\Z^2$
such that $z$ is the starting point  of some infinite open
oriented path of $\Lalt$. Hence $p \ge \palt$, so that $\pL \ge
\palt$.

Conversely, let $p>\palt$. By the block construction of
\cite{GH} or otherwise (see also \cite[Lemma 7]{GH10a}), there
is a strictly positive probability $\t^+(p)$ that the site
$(0,1)$ of $\Z^2$ is the starting point of an infinite open
oriented path of $\Lalt$ using no site with
$2$-coordinate lying in $(-\oo,0]$. By considering a reflection of $\Lalt$ in the
$2$-coordinate axis, there is probability at least
$p^{-1}\t^+(p)^2$ that there exists an open Lipschitz function
$F: \Z^2\to\Z^+$. Therefore, $p\ge \pL$, so that $\palt\ge
\pL$.
\end{proof}

\section{Exponential decay}\label{sec:exp}

In this section, we prove exponential decay of the tail of
$F(0)$ when $p>\pL$, as in Theorem \ref{thm:expdec0}. Let $d
\ge 2$ and $L_n=\Z^{d-1}\times \{n\}$, so that $\H=L_0$. For $x
\in \H$, let
$$
K_x:= \sup\{n: x\fr L_n\}.
$$
Recall the lowest open Lipschitz function $F$ of \eqref{defLip}.

\begin{lemma}\label{lemma:dual}
Let $p\in(0,1)$ and $x \in \H$.
The random variables $K_x$ and $F(x)-1 = l_x(M_x)$ have
the same distribution.
\end{lemma}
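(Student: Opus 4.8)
The plan is to exhibit an explicit equality of events, for each $n \ge 0$ and each outcome $\om$, relating $\{K_x \ge n\}$ to $\{l_x(M_x) \ge n\}$; since the two random variables are then equal pointwise (not merely equal in law), the conclusion follows immediately and in fact with no randomness invoked beyond the fixed $x$-translation invariance of $\P_p$. Concretely I would show
$$
\{l_x(M_x) \ge n\} = \{x \fr L_n\} = \{K_x \ge n\},
$$
where the second equality is just the definition of $K_x$, so the content is the first equality. Without loss of generality, by translation invariance of the lattice, I would take $x = 0$.

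First I would unwind the definition of the mountain. Recall $M_0 = \bigcup\{H_y : y \in \H,\ 0 \in H_y\}$ and $H_y = \{z : y \rap z\}$, and $l_0(M_0) = \sup\{k : ke_d \in M_0\}$. So $l_0(M_0) \ge n$ holds precisely when there is some $y \in \H$ with $0 \in H_y$ and $ne_d \in H_y$, i.e. with $y \rap 0$ and $y \rap ne_d$. The forward direction ($l_0(M_0)\ge n \Rightarrow 0 \fr L_n$) is then essentially immediate: given such a $y$, reversing the admissible $\di$-path from $y$ to $0$ does not directly give an admissible path, so instead I would argue as in the proof of Proposition \ref{thm:pcdbond} — from $y \rap 0$ and $y \rap ne_d$ one concatenates to get an admissible $\di$-path witnessing $0 \fr L_n$; care is needed because admissibility constrains upward steps, and the natural move is to note that $0 \rap y$ would be the dual statement but one actually has $y \rap 0$. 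The cleanest route is to observe that $0 \in H_y$ with $y \in \H$ says there is an admissible $\di$-path from $y \in \H$ down (in the loose sense) to $0 \in \H$; combined with the admissible $\di$-path from $y$ to $ne_d$, the reversal of the first path from $0$ up to $y$ is \emph{not} admissible in general, so I expect the correct formulation is to run the argument through $\dil$-paths and Lemma \ref{lem2}, exactly matching how $\rad(H_0)$ was handled in the proof of Lemma \ref{lem1}: there one exploited that $H_0 = \down H_0^\lam$ and that a $\dil$-path reaching any site can be extended by DD steps to reach $\H$.

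For the reverse direction ($0 \fr L_n \Rightarrow l_0(M_0) \ge n$), suppose there is an admissible $\di$-path $\pi$ from $0$ to some site $u \in L_n$. Let $y$ be the earliest site of $\pi$ lying in $\H = L_0$ (such a site exists, namely $0$ itself is in $\H$, but more usefully I would take the \emph{last} site of $\pi$ in $\H$, or — to get a downward path — I would instead consider the highest site and split $\pi$ there). The key structural point is that an admissible $\di$-path from $0$ to $u\in L_n$ must, reading upward, at some stage leave height $0$ and never... — rather than belabor this, the honest plan is: by Lemma \ref{lem2} applied appropriately, the existence of $0 \rap u$ with $h(u) = n$ forces the existence of a $y \in \H$ with $y \rap 0$ and $y \rap u$, so that $0, ne_d \in H_y$ and hence $ne_d \in M_0$, giving $l_0(M_0) \ge n$. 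Combining the two inclusions yields $\{l_0(M_0) \ge n\} = \{0 \fr L_n\} = \{K_0 \ge n\}$ for every $n \ge 0$ and every $\om$, whence $l_0(M_0)$ and $K_0$ are identically distributed (indeed equal), and by translation invariance the same holds at a general $x$.

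The main obstacle I anticipate is the bookkeeping around \emph{directionality and admissibility} of $\di$-paths: admissibility restricts only the termini of upward steps, so reversing a path does not preserve admissibility, and one must be careful to route every argument through the already-established machinery — Lemma \ref{lem2} ($H_0 = \down H_0^\lam$) and the $\dil$-path/DD-step extension trick from the proof of Lemma \ref{lem1} — rather than naively reversing paths. Getting the quantifier on $y \in \H$ placed correctly (that the \emph{same} $y$ witnesses both $y \rap 0$ and $y \rap ne_d$) is the crux; once that is pinned down the rest is formal.
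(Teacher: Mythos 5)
Your central claim --- that $\{l_0(M_0)\ge n\}$ and $\{0\fr L_n\}$ coincide as events, so that $l_0(M_0)$ and $K_0$ are equal pointwise --- is false, and the failure of exactly this identity is what makes the lemma nontrivial. Take $d=2$ and a configuration in which the only closed sites at heights $\ge 0$ are $(1,0)$ and $(1,1)$; in particular $(0,1)$ and every site at height $\ge 2$ is open. Then $(0,0)\to(1,-1)\to(1,0)\to(1,1)$ is an admissible $\di$-path from $0$ to $L_1$, so $K_0\ge 1$. On the other hand, no $y\in\H$ satisfies $y\rap e_d=(0,1)$: since $(0,1)$ is open the final step of such a nonnegative path would have to be DV or DD from height $2$, but with every site at height $\ge 2$ open no admissible nonnegative $\di$-path from $\H$ reaches height $2$. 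Hence $e_d\notin M_0$ and $l_0(M_0)=0<1\le K_0$. What \emph{is} true pointwise is $\{l_0(M_0)\ge n\}=\{\H\fr ne_d\}$ (reachability of the single site $ne_d$ by an admissible $\di$-path \emph{from} $\H$); this is a genuinely different event from $\{0\fr L_n\}$, having merely the same probability.

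The obstacle you correctly identify --- admissibility constrains only the termini of upward steps and so is not preserved under path reversal --- is the crux, but the paper resolves it by doing the reversal properly rather than by avoiding it. The paper's proof passes to the equivalent bond-percolation formulation, introduces reversed $\di^-$-paths (steps in $\{\pm e_d\}\cup\{e_d\pm e_j\}$) with the dual $(-)$-admissibility condition that downward steps traverse closed bonds, observes that $\pi$ is admissible if and only if its reversal is $(-)$-admissible, and then applies a reflection of the lattice in the $e_d$-direction to convert $(-)$-admissible $\di^-$-paths from $ne_d$ to $\H$ back into admissible $\di$-paths from $0$ to $L_n$. This yields $\P_p(\H\fr ne_d)=\P_p(ne_d\ram\H)=\P_p(0\fr L_n)$, an equality of probabilities and not of events. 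The tools you propose to lean on instead --- Lemma~\ref{lem2} and the DD-extension device from the proof of Lemma~\ref{lem1} --- relate $\rap$ to $\rapl$ and control nonnegativity of paths, but they say nothing about reversal and cannot substitute for the $\di^-$-path and reflection arguments.
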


\begin{proof}
This holds by a process of path-reversal. It is convenient in
this proof to work with bond percolation rather than site
percolation. Each bond of the hypercubic lattice $\Z^d$ is
designated `open' with probability $p$ and `closed' otherwise,
different bonds receiving independent states. We call a
$\di$-path \df{admissible} if any directed step along a bond
from some $y$ to $y+e_d$ is closed. It is clear that the set of
admissible paths has the same law as in the formulation using
site percolation.

A  $\di^-$-path from $u$ to $v$ is defined as any
finite sequence of distinct sites $u=x_0,x_1,\ldots,x_k=v$ of
$\Z^d$ such that, for each $i=1,2,\dots,k$,
\begin{equation}\label{lambdap2-}
x_i-x_{i-1}\in\{\pm e_d\}\cup \{e_d\pm
    e_j:j=1,\ldots,d-1\}.
\end{equation}
Any step in the direction $-e_d$ is called \df{downward}.
A $\di^-$-path is called \df{$(-)$-admissible} if
every downward step traverses a closed bond.

Let $\pi$ be a $\di$-path of $\Z^{d}$, and let $\rho\pi$ be the
$\di^-$-path obtained by reversing each step. Note that $\pi$
is admissible if and only if $\rho\pi$ is $(-)$-admissible.

It suffices to assume $x=0$.
Let $\Pi_n$ be the union over $y \in L_0$ of the set of $\di$-paths from
$y$ to $ne_d$. Then $\rho\Pi_n$ is the set of
$\di^-$-paths from $ne_d$ to $L$, so that
$$
\P_p(L \fr ne_d) = \P_p(ne_d \ram L),
$$
where $\ram$ denotes connection by an admissible $\di^-$-path.
By a reflection of the lattice, the
last probability equals $\P_p(0\fr L_n)$, so that
$$
\P_p(l_0(M_0) \ge n) = \P_p(0\fr L_n),
$$
and the claim follows by \eqref{defLip}.
\end{proof}

\begin{thm}\label{thm:expdec}
There exists $\a=\a(d,p)$ satisfying $\a(d,p)>0$ for $p>\pL$ such that
$$
\P_p(0 \fr L_n) \le e^{-\a n},\qq n\ge 0.
$$
\end{thm}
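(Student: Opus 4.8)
The plan is to adapt Menshikov's proof of exponential decay for subcritical percolation (see \cite{G99}) to the present setting, working with the quantity $\P_p(0\fr L_n)$. The key point is that, by Lemma \ref{lem2}, an admissible $\di$-path from $0$ that reaches height $n$ can be replaced by an admissible $\dil$-path reaching some site $u$ with $h(u)\le n$ together with a descent, so that $0\fr L_n$ is controlled by the event $\{0\rapl u \text{ for some } u \text{ with } h(u)=n\}$; equivalently, via the hill decomposition of Section \ref{sec:handm}, by $\{\rad(H_0^\lam)\ge n\}$ and hence by $\{\rad(M_0)\ge n\}$ in the two-sided version. Let me write $g_p(n):=\P_p(0\fr L_n)$. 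The goal is to show that $g_p$ decays exponentially whenever $\P_p(0\fr L_n)\to 0$, i.e.\ whenever $p>\pL$ (using \eqref{pLdef2} and the equivalence $l_0(M_0)<\oo$ a.s.\ $\iff \P_p(I)=1$, together with Lemma \ref{lemma:dual}).

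First I would set up the Menshikov-type differential/recursive inequality. The standard device is to fix a large box, condition on the configuration near height $n$, and use the fact that to connect to $L_{n+m}$ one must first connect to $L_n$ and then, from the appropriate ``frontier'' at height $n$, continue upward by a further $m$ levels. Because the admissible-path structure here is genuinely directed in the $e_d$-direction (upward steps must land on closed sites, and $\dil$-paths never go down), one gets a genuine renewal/subadditivity structure: $g_p(n+m)\le g_p(n)\cdot C(m)$ or, more precisely, an inequality of the form $g_p(n+m)\le \sum_{u} \P_p(0\rapl u, h(u)=k)\,\tilde g_p(n+m-k)$ summed over the last renewal level $k\le n$. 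The finiteness $\sum_{u\in\H}\P_p(0\rapl u)<\oo$ (which follows from \eqref{now21} only in the regime $\rho<1$, so here one needs instead the hypothesis $p>\pL$ to guarantee $\sum_n g_p(n)<\oo$ — see the next paragraph) then feeds the Menshikov machinery. Concretely, following \cite[Chap.~5]{G99}: let $r_p(n)$ be a suitable derivative-type quantity, show $g_p$ is related to $\sum_k g_p(k)$ by an inequality that, combined with $g_p(n)\to 0$, forces $g_p(n)\le e^{-\a n}$.

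The crucial intermediate step — and I expect this to be the main obstacle — is to upgrade $g_p(n)\to 0$ (which holds for all $p>\pL$, essentially by definition via Lemma \ref{lemma:dual} and the $0$-$1$ law) to $\sum_n g_p(n)<\oo$, or more generally to extract enough summability to start the Menshikov iteration. In ordinary Menshikov this is where one uses that in the subcritical phase $\theta(p)=0$ implies $\chi(p)<\oo$ via the van den Berg--Kesten / Aizenman--Barsky type argument; here the analogue is that $\P_p(l_0(M_0)<\oo)=1$ should imply $\E_p(\text{some size functional of }M_0)<\oo$, or at least that $\sum_n \P_p(\rad(M_0)\ge n)<\oo$. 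One route is a self-bounding argument: on $\{0\fr L_n\}$ there is a ``lowest'' such path, and conditioning on it one can try to show the conditional probability of continuing to $L_{2n}$ is bounded away from $1$ uniformly, using the ergodicity/stationarity of the field $(F(x))$ noted after \eqref{pLdef2} and the fact that the mountains $M_x$ at different $x\in\H$ are ``locally finite-range'' — each hill $H_y$ has an a.s.\ finite but heavy-tailed radius by Lemma \ref{lem1}'s method (though note \eqref{now25}--\eqref{now26} are only proved for $\rho<1$; for general $p>\pL$ one has only qualitative finiteness). Making this quantitative in the whole supercritical regime $p>\pL$, without the crutch $\rho<1$, is exactly the place where care is needed, and it is the reason the theorem is stated as an adaptation of Menshikov's argument rather than a direct consequence of the explicit estimates of Section \ref{sec:handm}.

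Finally, once exponential decay of $g_p(n)=\P_p(0\fr L_n)$ is established, Theorem \ref{thm:expdec0} follows immediately: by Lemma \ref{lemma:dual}, $F(0)-1=l_0(M_0)$ has the same law as $K_0$, and $\{K_0\ge n\}=\{0\fr L_n\}$, so $\P_p(F(0)>n)=\P_p(F(0)-1\ge n)=\P_p(K_0\ge n)=g_p(n)\le e^{-\a n}$, giving \eqref{eq:0dec} with the same $\a$. I would close by remarking that the positivity $\a(d,p)>0$ for $p>\pL$ is automatic from the Menshikov output once $g_p(n)\to 0$ is known to hold throughout that range.
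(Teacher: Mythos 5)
Your top-level instinct --- adapt Menshikov's argument, applied to $g_p(n)=\P_p(0\fr L_n)$, and transfer the conclusion to $F(0)$ via Lemma~\ref{lemma:dual} --- matches the paper's strategy, and your final paragraph about deducing Theorem~\ref{thm:expdec0} is correct. But the description of \emph{what} Menshikov's argument actually consists of is off in two substantial ways, and those misconceptions lead you to identify an obstacle where none exists while skipping the real work.

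First, Menshikov's argument is not a renewal/subadditivity argument of the form $g_p(n+m)\le g_p(n)\,C(m)$ or a decomposition over a ``last renewal level''. The engine is Russo's formula applied to the decreasing event $\axmn=\{x\fr L_n\text{ in }R_m\}$ in a finite box $R_m$, giving
$$
\frac{d}{dp}\log \P_p(\axmn) = -\frac{1}{1-p}\,\E_p\bigl(N(\axmn)\bigmid \axmn\bigr),
$$
where $N$ counts pivotal sites. The core estimate is a lower bound on the conditional expected number of pivotals, $\E_p(N(\axmn)\mid\axmn)\ge n/\bigl(\sum_{r\le n} h_p^m(r)\bigr)-1$, proved via the BK inequality in the directed setting (one must be a little careful, since the right notion of disjointness here concerns upward steps only). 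None of Russo's formula, pivotal sites, or BK appear in your sketch; the ``condition on a lowest path and show the continuation probability is bounded away from one'' route is a different argument and is not developed far enough to be checked.

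Second, the ``crucial intermediate step'' you identify --- upgrading $g_p(n)\to 0$ to $\sum_n g_p(n)<\oo$ before starting --- is not needed. Menshikov's proof does \emph{not} go through the Aizenman--Barsky/van den Berg--Kesten implication $\theta(p)=0\Rightarrow\chi(p)<\oo$; indeed the point of Menshikov's and Aizenman--Barsky's theorems is to obtain exponential decay directly, not via finite susceptibility. Once the differential inequality is integrated from $\a$ to $\b$ (with $\pL<\a<\b$) and one lets $m\to\oo$, one gets
$$
g_\b(n)\le g_\a(n)\exp\!\left(-(\b-\a)\left[\frac{n}{\sum_{r=0}^n g_\a(r)}-1\right]\right),
$$
and the standard bootstrap of \cite[Sect.\ 5.2]{G99} uses only $g_\a(n)\to 0$ (hence $\sum_{r\le n}g_\a(r)=o(n)$ by Ces\`aro) together with sprinkling in $p$ to improve this successively to exponential decay. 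So the summability you are worried about is a consequence, not a prerequisite. In short: the gap is that you have not set up the Russo/pivotal-site machinery that the proof actually rests on, and the workaround you propose (renewal structure, or $\chi<\oo$ first) is either a different argument or a step that the genuine Menshikov argument avoids.
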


\begin{proof}[Proof of Theorem \ref{thm:expdec0}]
This is immediate by Theorem \ref{thm:expdec} and Lemma
\ref{lemma:dual}.
\end{proof}

\begin{proof}[Proof of Theorem \ref{thm:expdec}]
The proof is an adaptation of Menshikov's proof of a
corresponding fact for percolation, as presented in \cite[Sect.
5.2]{G99}. We shall use the BK inequality of \cite{BK} (see
also \cite[Sect.\ 2.3]{G99}). The application of the BK
inequality (but not the inequality itself) differs slightly in
the current context from that of regular percolation, and we
illustrate this as follows. Let $a, b, u, v \in \Z^d$. The
`disjoint occurrence' of the events $\{a \fr b\}$ and $\{u \fr
v\}$ is written as usual $\{a\fr b\} \circ \{u\fr v\}$.  In
this setting, it comprises the set of configurations such that: there
exist admissible $\di$-paths $\pi_{a,b}$ from $a$ to $b$, and
$\pi_{u,v}$ from $u$ to $v$, such that each directed edge from
some $x$ to $x+e_d$ lies in no more than one of $\pi_{a,b}$,
$\pi_{u,v}$. That is, the paths must have no upward step in
common; they are permitted to have downward steps in common.

Let $m \ge 1$ and
$R_m=[-m,m]^{d-1} \times \Z$. For $x \in R_m \cap L$
and $r \ge 0$, let
$$
g_p^{x,m}(r) = \P_p(x \fr L_r\text{  in } R_m),
\qq g_p(r) = \P_p(0 \fr L_r),
$$
and
\begin{equation}\label{l1}
h_p^m(r) = \max\{g_p^{x,m}(r): x \in R_m \cap L\}.
\end{equation}
On recalling the definition of an admissible $\di$-path, we see
that the event $\{x \fr L_r\text{ in } R_m\}$ is a
finite-dimensional cylinder event, and so each $g_p^{x,m}(r)$ is a
polynomial in $p$. Note that
$$
h_p^m(r) \ge g_p^{0,m}(r) \to g_p(r)\q \text{ as } m \to\oo.
$$
Since $g_p^{x,m}(r) \le  g_p^{x,\oo}(r) = g_p(r)$, we have that
\begin{equation}\label{l2}
h_p^m(r) \to g_p(r) \qq\text{as } m\to\oo.
\end{equation}

By \eqref{l1}, $h_p^m(r)$ is the maximum of a finite
set of polynomials in $p$. Therefore, $h_p^m(r)$ is a continuous
function of $p$, and there exists a finite set
$\sD^m(r) \subseteq (0,1)$ such that:
for $p\notin \sD^m(r)$,
there exists $x=x^m_p(r) \in R_m \cap L$ with
\begin{equation}\label{l3}
\frac d{dp}h_p^m(r) = \frac d{dp} g_p^{x,m}(r).
\end{equation}

Let $x \in R_m \cap L$ and $\axmn= \{x\fr L_n \text{ in }
R_m\}$, so that $g_p^{x,m}(n) = \P_p(\axmn)$. By Russo's
formula, \cite[eqn (5.10)]{G99},
\begin{equation}\label{l5}
\frac d{dp} \log \P_p(\axmn)
= -\frac 1{1-p} \E_p\bigl(N(\axmn)\bigmid \axmn\bigr),
\end{equation}
where $N(A)$ is the number of pivotal sites for a decreasing
event $A$. We claim, as in \cite[eqn (5.18)]{G99},
that
\begin{equation}\label{k2}
\E_p(N\bigl(\axmn) \bigmid \axmn\bigr) \ge \frac n{\sum_{r=0}^n h_p^m(r)} -1.
\end{equation}
Once this is proved, it follows by \eqref{l5} that
$$
\frac d{dp}\log g_p^{x,m}(n)
\le -\frac 1{1-p}\left(\frac n{\sum_{r=0}^n h_p^m(r)} -1\right).
$$
Therefore, by \eqref{l3}, for $p\notin \sD^m(n)$,
\begin{equation}\label{l7}
\frac d{dp}\log h_p^m(n)
\le -\frac 1{1-p}\left(\frac n{\sum_{r=0}^n h_p^m(r)} -1\right).
\end{equation}

We integrate \eqref{l7} to obtain, for $0<\a < \b<1$,
$$
h_\b^m(n) \le h_\a^m(n)
\exp\left(-(\b-\a)\left[\frac n{\sum_{r=0}^n h_\a^m(r)} -1\right]\right),
$$
as in \cite[eqn (5.22)]{G99}. Let $m\to\oo$, and deduce by \eqref{l2} that
\begin{equation}\label{l6}
g_\b(n) \le g_\a(n)
\exp\left(-(\b-\a)\left[\frac n{\sum_{r=0}^n g_\a(r)} -1\right]\right).
\end{equation}
This last inequality may be analyzed just as
in \cite[Sect. 5.2]{G99} to obtain the claim of the theorem.

It remains to prove \eqref{k2}, and the proof is essentially
that of \cite[Lemma 5.17]{G99}. Fix $x \in R_m \cap L$, and let
$V$ be a random variable taking values in the non-negative
integers with
$$
\P(V \ge k) = h_p^{m}(k),\qq k \ge 0.
$$
Suppose $\axmn$ occurs, and let
$z_1,z_2,\dots,z_N$ be the pivotal sites for $\axmn$, listed
in the order in which they are encountered beginning at $x$.
Let $\rho_1:= \max\{0,h(z_1)-1\}$ and
\begin{equation}\label{defrho}
\rho_i:=
\max\bigl\{0,h(z_{i})-h(z_{i-1})-1\bigr\}, \qq i=2,3,\dots,N.
\end{equation}
Thus, $\rho_i$ measures the positive part of the
`vertical' displacement between the $(i-1)$th and $i$th pivotal sites.

We claim as in \cite[eqn (5.19)]{G99} that, for $k \ge 1$,
\begin{equation}\label{k2.5}
\P_p\bigl(\rho_1+\dots+\rho_k \le
 n-k \bigmid \axmn\bigr) \ge \P(V_1+\dots+V_k \le n-k)
\end{equation}
where the $V_i$ are independent, identically distributed copies
of $V$. Equation \eqref{k2} follows from this as in
\cite[Sect.\ 5.2]{G99}, and, as in that reference, it suffices
for \eqref{k2.5} to show the equivalent in the current setting
of \cite[Lemma 5.12]{G99}, namely the next lemma.
\end{proof}

\begin{lemma}
Let $k$ be a positive integer, and let $r_1,r_2,\dots,r_k$ be
non-negative integers with sum not exceeding $n-k$. With the
above notation, for $x\in R_m\cap L$,
\begin{align}\label{k3}
\P_p\bigl(\rho_k \le r_k,\ &\rho_i=r_i
\text{\rm\ for } 1\le i < k \bigmid \axmn\bigr)\\
&\ge \P(V\le r_k) \P_p\bigl(\rho_i=r_i
\text{\rm\ for } 1\le i < k\bigmid \axmn\bigr).\nonumber
\end{align}
\end{lemma}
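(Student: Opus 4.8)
The plan is to follow the structure of \cite[Lemma 5.12]{G99}, adapting it to the $\di$-path setting where the BK inequality applies only with respect to upward edges. The key point is that, on the event $\axmn$, if the first $k-1$ positive vertical displacements between consecutive pivotal sites are exactly $r_1,\dots,r_{k-1}$, then conditionally on this data the remaining part of the construction — from the $(k-1)$th pivotal site onward — looks like an admissible $\di$-path problem in a region where the relevant edges are (still) unexplored, so that the extra vertical rise $\rho_k$ is stochastically dominated by $V$.

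First I would set up the conditioning carefully. Let $B$ denote the event $\{\rho_i = r_i \text{ for } 1\le i<k\}\cap\axmn$. On $B$, the $(k-1)$th pivotal site $z_{k-1}$ is determined (with $z_0 := x$ when $k=1$), as is the ``explored'' portion of the configuration needed to locate $z_1,\dots,z_{k-1}$ and to verify that the path has reached height $h(z_{k-1})$. The standard pivotality argument shows that, given $B$, the portion of the event $\axmn$ ``beyond'' $z_{k-1}$ is exactly the event that $z_{k-1}$ is connected by an admissible $\di$-path to $L_n$ within $R_m$, using only sites whose status has not yet been revealed; crucially, because pivotal sites are encountered in order and the exploration proceeds upward, the unrevealed region contains the relevant translate of $R_m$. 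By translation and monotonicity, the conditional probability that this continuation reaches height at least $h(z_{k-1}) + r_k + 1$ (i.e.\ that $\rho_k > r_k$, equivalently $\rho_k \le r_k$ fails) is at most $h_p^m(r_k+1)$; thus $\P_p(\rho_k \le r_k \mid B) \ge 1 - h_p^m(r_k+1) = 1 - \P(V \ge r_k+1) = \P(V \le r_k)$, which rearranges to \eqref{k3}.

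The verification that the continuation event, conditioned on $B$, dominates (is at least as likely as) the corresponding unconditioned height-reaching event is the heart of the matter. Here I would be careful about two features special to the $\di$-path setting: (i) admissibility constrains only upward steps, so the disjointness in the BK-type argument is only required for upward edges, matching the definition of $\circ$ given just before the lemma; and (ii) the exploration that discovers $z_1,\dots,z_{k-1}$ may have revealed some downward edges in the continuation region, but revealing a downward edge to be open only \emph{helps} the continuation reach greater height, and revealing it closed is what admissibility may require anyway — so a short monotonicity/coupling argument shows the conditional law of the continuation stochastically dominates the fresh one. Making this last point rigorous — i.e.\ identifying precisely which edges have been explored in locating the pivotals and checking the domination edge by edge — is the main obstacle; everything else is a routine transcription of Menshikov's argument as in \cite[Sect.\ 5.2]{G99}.

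Finally I would note that \eqref{k3} yields \eqref{k2.5} by summing over $r_1,\dots,r_{k-1}$ with fixed sum and iterating down in $k$, exactly as in \cite{G99}; and \eqref{k2.5} yields \eqref{k2} via the same renewal-type estimate used there, since $\sum_{i=1}^N \rho_i \le n - N$ always holds on $\axmn$ (the total vertical rise is at most $n$, and each of the $N$ pivotal steps contributes at least $1$ beyond what $\rho_i$ counts). This completes the chain back to \eqref{l7} and hence to the theorem.
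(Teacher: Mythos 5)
Your plan correctly identifies the shape of the argument --- condition on the exploration that locates $z_1,\dots,z_{k-1}$, observe that the continuation behaves like a fresh admissible-path problem in the unexplored region, and apply a BK-type bound to control $\rho_k$ --- but it leaves precisely the crux as a self-declared ``main obstacle,'' and so is an outline rather than a proof. The paper supplies the missing device explicitly. For a site $z$, define $D_z$ to be the set of sites attainable from $x$ along admissible $\di$-paths of $R_m$ that avoid the upward step from $z-e_d$ to $z$; then condition on the pair $\De=(D_\zeta,\zeta)$, where $\zeta$ is the unique site (on the relevant event) at which the pivotal data $\rho_1=r_1,\dots,\rho_{k-1}=r_{k-1}$ are realized, with the requirements that $\zeta$ be closed, $\zeta-e_d\in D_\zeta$, $\zeta\notin D_\zeta$, and $D_\zeta\cap L_n=\es$. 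Given $\{\De=\delta\}$, the explored sites are exactly those in $\delta_z$ and its upward boundary, all other sites remain i.i.d.\ Bernoulli$(p)$, and no admissible $\di$-path in $R_m\sm\delta_z$ ever queries an explored site (an upward step cannot land in the boundary, and $z-e_d\in\delta_z$ blocks the one step that would query $\zeta$). This is what licenses the identity $\P_p(\axmn\mid B\cap\{\De=\delta\})=\P_p(z\fr L_n\text{ in }R_m\sm\delta_z)$ and the subsequent application of the BK inequality \emph{inside that conditional measure}; summing over $\delta$ and dividing gives \eqref{k3}. You gesture at this (``identifying precisely which edges have been explored'') but defer it, and it is exactly the part that cannot be deferred.

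There is also a substantive error in the step you do write out. You assert that ``the continuation reaches height at least $h(z_{k-1})+r_k+1$'' is equivalent to $\{\rho_k>r_k\}$, and then bound its conditional probability by ``translation and monotonicity.'' Neither half is correct. Your conditioning event $B$ includes $\axmn$, so the continuation reaches $L_n$ and hence passes every lower level with conditional probability one; the event you name therefore has conditional probability $1$, not $\le h_p^m(r_k+1)$. The event $\{\rho_k>r_k\}$ in fact asserts that the next pivotal is at height at least $h(z_{k-1})+r_k+2$, and the content of the lemma is that, via pivotality, this forces two connections from $\zeta$ to $L_{h(\zeta)+r_k+1}$ sharing no upward step, i.e.\ a disjoint occurrence. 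It is this disjoint-occurrence event whose conditional probability BK bounds by $\P(V>r_k)\cdot\P_p(\axmn\mid B\cap\{\De=\delta\})$; translation and monotonicity alone cannot remove the bias introduced by conditioning on $\axmn$. The $k=1$ case, which the paper treats separately as a direct BK application, already exhibits this structure cleanly and would have been a useful sanity check for your outline.
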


\begin{proof}
Suppose first that $k=1$ and $0\le r_1 \le n-1$. By the BK
inequality,
\begin{align*}
\P_p(\{\rho_1>r_1\}\cap \axmn) &\le \P_p(A^{x,m}(r_1+1) \circ \axmn)\\
&\le \P_p(A^{x,m}(r_1+1)) \P_p(\axmn)\\
&\le \P(V > r_1)\P_p(\axmn),
\end{align*}
so that \eqref{k3} holds with $k=1$.

Turning to the general case, let $k\ge 1$ and let the $r_i$
satisfy the given conditions. For a site $z$, let $D_z$ be the
set of sites attainable from $x$ along admissible $\di$-paths
of $R_m$ not containing the upward step from $z-e_d$ to $z$.
Let $B_z$ be the event that the following statements hold:
\begin{letlist}
\item $z-e_d\in D_z$, and $z \notin D_z$,
\item $z$ is closed,
\item $D_z$ contains no vertex of $L_n$,
\item the pivotal sites for the event $\{ 0\fr z\}$ are, taken in
order, $z_1,z_2,
\dots, z_{k-1}=z$, with the $\rho_i$ of \eqref{defrho} satisfying
$\rho_i=r_i$ for $1\leq i<k$.
\end{letlist}
Let $B=\bigcup_z B_z$, and note that
\begin{equation}\label{then1}
B \cap \axmn = \{\rho_i=r_i \text{ for } 1\le i<k\} \cap \axmn.
\end{equation}
For $\om \in \axmn \cap B$, there exists a unique site
$\zeta=\zeta(\om)$ such that $B_\zeta$ occurs.

Let $\Delta$ denote the ordered pair $(D_\zeta,\zeta)$.
Now,
\begin{equation}\label{k4}
\P_p(\axmn \cap B) = \sum_\delta
\P_p(B\cap \{\De=\delta\})\P_p\bigl(\axmn\bigmid B\cap \{\De=\delta\}\bigr),
\end{equation}
where the sum is over all possible values $\delta=(\delta_z,z)$ of the
random pair $\De$. Note that
$$
\P_p\bigl(\axmn\bigmid B\cap \{\De=\delta\}\bigr)=
\P_p\bigl(z \fr L_n \text{ in } R_m\sm \delta_z\bigr).
$$

By a similar argument and the BK inequality,
\begin{align}\label{k5}
\P_p(&\{\rho_k>r_k\} \cap \axmn \cap B)\\
&= \sum_\delta \P_p(B\cap \{\De=\delta\})
\P_p\bigl(\{\rho_k>r_k\}\cap\axmn\bigmid B\cap \{\De=\delta\}\bigr),
\nonumber
\end{align}
and
\begin{align*}
\P_p\bigl(\{\rho_k>r_k\}&\cap\axmn\bigmid B\cap \{\De=\delta\}\bigr)\\
&\le \P(V >r_k) \P_p\bigl(\axmn \bigmid B\cap \{\De=\delta\}\bigr).
\end{align*}
On dividing \eqref{k5} by \eqref{k4}, we deduce that
$$
\P_p\bigl(\rho_k > r_k \bigmid \axmn \cap B\bigr) \le \P(V>r_k).
$$
The lemma is proved on multiplying through by $\P_p(B \mid \axmn)$,
and recalling \eqref{then1}.
\end{proof}

\section{Decomposition of hill-ranges}\label{sec:decomp}

This section is devoted to the domination argument used in the
proof of Theorem \ref{thm:indbnd0}. Let $\om\in \Om$ be a
configuration of the site percolation model on $\Z^d$. Let
$v_1,v_2,\dots$ be an arbitrary but fixed ordering of the
vertices in $\H$, and write $H_i=H_{v_i}$ for the hill at
$v_i$, as defined in \eqref{defhill}. We shall construct the
$H_i$ in an iterative manner, and observe the relationship
between a hill thus constructed and the previous hills. To this
end, we introduce some further notation. For $i \ge 1$, let
$$
\oH_i :=\bigcup_{j\le i} H_j.
$$

We call a $\di$-path {\dof \nneg} if it visits no site $w$ with
$h(w)<0$. For $u,v\in \Z^d \sm \oH_i$, write $u \frp i v$ if
there exists an admissible \nneg\ $\di$-path from $u$ to $v$
using no site of $\oH_i$. The `restricted' hill at $v_{i+1}$ is
given by
$$
\tH_{i+1} :=  \{z\in\Z^d: v_{i+1} \frp i z\}.
$$
That is, $\tH_{i+1}$ is given as before but in terms of \nneg\
$\di$-paths in the region obtained from $\Z^d$ by removal of
all hills already constructed. If $v_{i+1} \in \oH_i$, then
$\tH_{i+1} := \es$. Finally, let $\tH_1 := H_1$.

\begin{lemma}\label{thm:decomp}
For $\om\in\Om$,
\begin{equation}\label{now12}
\oH_i = \bigcup_{j\le i} \tH_j, \qq i\ge 1.
\end{equation}
\end{lemma}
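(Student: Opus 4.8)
The plan is to prove \eqref{now12} by induction on $i$. The base case $i=1$ is immediate since $\tH_1 := H_1 = \oH_1$ by definition. For the inductive step, assume $\oH_{i} = \bigcup_{j\le i} \tH_j$; since $\oH_{i+1} = \oH_i \cup H_{i+1}$ and $\bigcup_{j\le i+1}\tH_j = \oH_i \cup \tH_{i+1}$ by the hypothesis, it suffices to show that
$$
\oH_i \cup H_{i+1} = \oH_i \cup \tH_{i+1}.
$$
The inclusion $\tH_{i+1} \subseteq H_{i+1}$ is clear because any admissible \nneg\ $\di$-path from $v_{i+1}$ avoiding $\oH_i$ is in particular an admissible $\di$-path from $v_{i+1}$; hence $\oH_i \cup \tH_{i+1} \subseteq \oH_i \cup H_{i+1}$. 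The content is the reverse inclusion: if $z \in H_{i+1}$ then $z \in \oH_i \cup \tH_{i+1}$.

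For that, fix $z \in H_{i+1}$, so there is an admissible $\di$-path $\pi$ from $v_{i+1}$ to $z$; recall that, since $v_{i+1} \in \H$ and $\di$-paths are built from the steps in \eqref{lambdap}, every site of $\pi$ has nonnegative height, so $\pi$ is automatically \nneg. If $\pi$ uses no site of $\oH_i$, then $z \in \tH_{i+1}$ and we are done. Otherwise, let $w$ be the \emph{last} site of $\pi$ that lies in $\oH_i$, say $w \in H_j$ for some $j \le i$. Then $w = v_j$ or there is an admissible \nneg\ $\di$-path from $v_j$ to $w$; concatenating such a path with the portion of $\pi$ running from $w$ to $z$ produces a $\di$-path from $v_j$ to $z$ whose every step is admissible and which (after the initial segment) lies outside $\oH_i$ except possibly revisiting $w$. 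The point is that the sub-path of $\pi$ from $w$ to $z$ already witnesses, together with the structure of $\oH_j$, that $z$ is reachable; in fact I would argue directly that $z \in H_j \subseteq \oH_i$: the segment of $\pi$ from $w$ to $z$ is an admissible $\di$-path, and since $w \in H_j$ means $v_j \rap w$, transitivity of $\rap$ (concatenation of admissible $\di$-paths, with loop-erasure to restore distinctness of sites, as in the proof of Lemma~\ref{lem2}) gives $v_j \rap z$, i.e.\ $z \in H_j \subseteq \oH_i$.

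The one point requiring care — and the main obstacle — is the loop-erasure step: concatenating the $v_j$-to-$w$ path with the $w$-to-$z$ sub-path of $\pi$ may produce repeated sites, but erasing the resulting loops yields a genuine admissible $\di$-path (admissibility is preserved under loop-erasure since we only delete steps, never create upward steps terminating at open sites), so $v_j \rap z$ holds. This shows $z \in \oH_i$, completing the reverse inclusion and hence the induction. I do not expect the \nneg\ bookkeeping to cause trouble, since every site on a $\di$-path emanating from $\H$ has height $\ge 0$ by \eqref{lambdap}; the only genuine subtlety is ensuring that the concatenated path can be repaired to a bona fide $\di$-path, which is exactly the device already used in Lemma~\ref{lem2}.
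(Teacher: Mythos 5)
Your proof is correct and takes essentially the same route as the paper: induction on $i$, with the key step being that if an admissible \nneg\ $\di$-path $\pi$ witnessing $z\in H_{i+1}$ intersects $\oH_i$, then $z\in\oH_i$. The paper takes the \emph{first} intersection point and invokes the absorption property that no admissible \nneg\ $\di$-path can exit $\oH_I$; you take the \emph{last} intersection point $w\in H_j$ and invoke transitivity of $\rap$ (with loop-erasure to repair the concatenation). These are the same idea, and your remark about loop-erasure preserving admissibility correctly handles the only subtlety.

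However, one stated justification is false. You assert that $\pi$ is ``automatically'' \nneg\ because $v_{i+1}\in\H$ and $\di$-paths are built from the steps in \eqref{lambdap}. That is not so: the steps $-e_d$ and $-e_d\pm e_j$ in \eqref{lambdap} all descend, so a $\di$-path starting at height $0$ can certainly reach sites of negative height. The reason $\pi$ may be taken \nneg\ is simply the definition of $H_{i+1}$: it is built from $\rap$, and by definition $\rap$ refers only to admissible $\di$-paths using no vertex of negative height. The conclusion you need is correct, but not for the reason you give; nothing downstream is affected once this is fixed, since the segment of $\pi$ from $w$ to $z$ is then \nneg\ for the right reason and the rest of your argument goes through.
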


Note that the above is a \emph{pointwise} statement in that it
holds for all configurations $\om$. Its main application is as
follows. In writing that two random variables $A$, $B$ may be
coupled with a certain property $\Pi$, we mean that there
exists a probability space that supports two random variables
$A'$, $B'$ having the same laws as $A$, $B$ and with property
$\Pi$. Let $(J_i: i=1,2,\dots)$ be independent, identically
distributed subsets of $\Z^d$ such that $J_i$ has the same law
as $H_i$.

\begin{thm}\label{thm:indbnd}
The families $(H_i:i \ge 1)$ and $(J_i: i \ge 1)$ may be
coupled in such a way that the following property holds:
\begin{equation}\label{now11}
\oH_i \subseteq \bigcup_{j\le i} J_j, \qq i \ge 1.
\end{equation}
\end{thm}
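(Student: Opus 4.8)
The plan is to build the coupling iteratively, exploiting Lemma~\ref{thm:decomp} together with the observation that the restricted hill $\tH_{i+1}$ is, conditionally on the configuration already revealed, stochastically dominated by a fresh independent copy of a hill. First I would set up the revealing procedure: process the vertices $v_1,v_2,\dots$ of $\H$ in the fixed order, and at step $i+1$ expose exactly the states of the sites needed to determine $\tH_{i+1}$ via admissible \nneg\ $\di$-paths from $v_{i+1}$ that avoid $\oH_i$. The key structural point is that $\tH_{i+1}$ is a deterministic function of the states of sites \emph{outside} $\oH_i$, and that $\oH_i$ itself is determined by the previously exposed information; so conditionally on $\oH_i = \ol h$ (a fixed set), $\tH_{i+1}$ is obtained by running the hill-exploration from $v_{i+1}$ in the percolation configuration restricted to $\Z^d \sm \ol h$, using only fresh randomness.

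The second step is the stochastic-domination comparison at a single stage. I claim that, conditionally on $\oH_i=\ol h$, the restricted hill $\tH_{i+1}$ is stochastically dominated by an unconstrained hill $H_{v_{i+1}}$ built from an independent configuration. This is because removing sites from the ambient lattice can only destroy $\di$-paths, never create them: a \nneg\ admissible $\di$-path from $v_{i+1}$ avoiding $\ol h$ is in particular such a path in the full lattice, and the exploration of $\tH_{i+1}$ reads only the states of sites in $\Z^d\sm\ol h$, which are independent of everything used to determine $\oH_i$. Hence, by a standard coupling of the exploration processes (reveal the same site-states in the same order, and let the unconstrained hill continue exploring through sites the restricted one is forced to skip), one can construct on a common probability space a set $J_{i+1}'$ with the law of a hill, independent of the past, with $\tH_{i+1}\subseteq J_{i+1}'$. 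Carrying this out for every $i$, and invoking Lemma~\ref{thm:decomp} to write $\oH_i = \bigcup_{j\le i}\tH_j \subseteq \bigcup_{j\le i} J_j'$, yields \eqref{now11}; the collection $(J_i':i\ge1)$ is i.i.d.\ with the hill law by construction, so it has the same joint law as $(J_i:i\ge1)$, and relabelling gives the theorem.

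The main obstacle is making the single-stage domination rigorous as an \emph{almost-sure set inclusion} under an explicit coupling, rather than merely a comparison of marginal laws. Concretely, one must argue that the exploration of $\tH_{i+1}$ and that of an independent unconstrained hill $H_{v_{i+1}}$ can be run off the same source of randomness (the site-states in $\Z^d\sm\oH_i$, supplemented by independent fresh states for sites in $\oH_i$) so that every site placed in $\tH_{i+1}$ is also placed in the unconstrained hill — the point being that the unconstrained exploration, when it reaches a site of $\oH_i$, simply sees whatever independent state was assigned there and proceeds, and so can only reach \emph{more} sites. Care is needed because the order in which sites are revealed in the two explorations differs; the clean way is to note that a hill $H_y$ is an increasing function of the set of open sites (upward steps want closed sites, but the event $\{y\rap z\}$ is a decreasing event in the site states, hence increasing in the closed sites), so removing the constraint and supplying arbitrary states on $\oH_i$ can only enlarge the hill, and then apply Strassen's theorem in the conditional form to obtain the \as\ coupling at each stage. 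Chaining these conditional couplings — formally, by working on the canonical product space and applying the coupling stage by stage, conditioning on the $\sigma$-field generated by the first $i$ stages — completes the argument.
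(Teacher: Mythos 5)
Your overall architecture --- process the vertices in order, invoke Lemma~\ref{thm:decomp}, prove a single-stage stochastic comparison of the restricted hill $\tH_{i+1}$ with a fresh independent hill, and chain --- is exactly the paper's. Your final paragraph (monotonicity in the closed configuration, Strassen, conditioning on the $\sigma$-field of the first $i$ stages) is a valid, if slightly heavier, alternative to the paper's more direct move: the paper simply runs each restricted exploration inside a \emph{fresh independent} percolation $\om^{(i)}$, so that $\tH_i'\subseteq J_i$ is automatic and no abstract coupling theorem is needed. But there is a genuine gap in the conditional-independence claim that makes all of this go through.

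You assert that the exploration of $\tH_{i+1}$ ``reads only the states of sites in $\Z^d\sm\ol h$, which are independent of everything used to determine $\oH_i$.'' That statement is false. To certify that $\oH_i$ equals a given set $\ol h$ (and does not extend further upward), one must know that every site of $\Du\ol h := \{x+e_d : x\in\ol h\}\sm\ol h$ is \emph{open}; these sites lie in $\Z^d\sm\ol h$, yet their states are very much part of what determines $\oH_i$. Consequently the percolation configuration on $\Z^d\sm\ol h$ is \emph{not} conditionally fresh given $\{\oH_i=\ol h\}$, and your proposed coupling --- keep the original states on $\Z^d\sm\oH_i$, resample only on $\oH_i$ --- does not give an unconstrained hill independent of the past. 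What rescues the argument, and this is the observation the paper articulates and that you skip, is that an admissible non-negative $\di$-path avoiding $\ol h$ can never contain an upward step ending at a site of $\Du\ol h$: such a step would have to start at the site directly below, which lies in $\ol h$. Hence the restricted exploration reads only the states of sites in $\Z^d\sm(\ol h\cup\Du\ol h)$, and \emph{those} are independent of $\{\oH_i=\ol h\}$, since this event is measurable with respect to the states in $\ol h\cup\Du\ol h$. With that fix --- and resampling on $\oH_i\cup\Du\oH_i$ rather than on $\oH_i$ alone in the coupled unconstrained exploration --- your chaining argument completes the proof.
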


\begin{proof}[Proof of Lemma \ref{thm:decomp}]
We prove equation \eqref{now12} by induction on $i$.
It is trivial for $i=1$.

Suppose \eqref{now12} holds for $i=I\ge 1$, and consider the
case $i=I+1$. Suppose $\oH_I$ has been found, and consider the
vertex $v_{I+1}$. If $v_{I+1} \in \oH_I$, then
$H_{I+1}\subseteq \oH_I$ and $\tH_{I+1}=\es$. In this case,
\eqref{now12} holds with $i=I+1$.

Suppose $v_{I+1} \notin \oH_I$. Since $\tH_{I+1}$ is the hill
of $v_{I+1}$ within a restricted domain, we have $\tH_{I+1}
\subseteq H_{I+1}$ and $\oH_{I+1} \supseteq \oH_I\cup
\tH_{I+1}$. It remains to prove that $\oH_{I+1} \subseteq
\oH_I\cup \tH_{I+1}$, which holds if $\oH_{I+1} \sm \oH_I
\subseteq  \tH_{I+1}$. Let $y \in \oH_{I+1}\sm \oH_I =
H_{I+1}\sm \oH_I$. Since $y \in H_{I+1}$, there exists an
admissible \nneg\ $\di$-path $\pi$ from $v_{I+1}$ to $y$. If
$\pi \cap \oH_I \ne \es$, $\pi$ has a first vertex $z$ lying in
$\oH_I$. Since no admissible \nneg\ $\di$-path can exit
$\oH_I$, all points on $\pi$ beyond $z$ belong to $\oH_I$, and
in particular $y\in \oH_I$, a contradiction. Therefore, there
exists an admissible \nneg\ $\di$-path from  $v_{I+1}$ to $y$
not intersecting $\oH_I$, which is to say that $y \in
\tH_{I+1}$. In this case, \eqref{now12} holds with $i=I+1$.

In all cases, \eqref{now12} holds with $i=I+1$, and the
induction step is complete.
\end{proof}

\begin{proof}[Proof of Theorem \ref{thm:indbnd}]
For $S \subseteq\Z^d$, write
$$
\Du S=\{x+e_d: x\in S\}\sm S.
$$
By the definition of the hill $H_y$ of a vertex $y\in\H$, every
vertex in $\Du H_y$ is open. Since $H_y$ may be constructed by
a path-exploration process, an event of the form $\{H_y= S\}$
is an element of the $\s$-field $\sF_S$ generated by the random
variables $\om(s)$, $s \in S \cup \Du S$. Furthermore, for $i
\ge 1$, the event $\{\oH_i=S\}$ lies in $\sF_S$.

Suppose we are given that $\oH_i = S$ for some $i$ and some
$S$. Any event defined in terms of admissible \nneg\
$\di$-paths of $\Z^d \sm S$ is independent of the states of
$\Du S$, since no such admissible $\di$-path contains an upward
step with second endvertex in $\Du S$.

Consider a sequence of independent site percolations on $\Z^d$
with parameter $p$. Let $J_i$ be the hill at $v_i$ in the $i$th
such percolation model. In particular, for every $i$, $J_i$ has
the same law as $H_i$. We construct as follows a sequence
$(\tH_i')$ with the same joint law as $(\tH_i)$ and satisfying
$$
\bigcup_{j\le i} \tH_j' \subseteq \bigcup_{j\le i} J_j,\qq i \ge 1,
$$
and the claim of the theorem will follow by Lemma
\ref{thm:decomp}. First, we take $\tH_1'=J_1$. Then we let
$\tH_2'$ be the subset of $J_2$ containing all endpoints of all
admissible \nneg\ $\di$-paths from $v_2$ in the second
percolation model that do not intersect $\tH_1'$. More
generally, having found $\tH_j'$ for $j<i$, we let $\tH_i'$ be
the subset of $J_i$ containing all endpoints of all admissible
\nneg\ $\di$-paths from $v_i$ in the $i$th percolation model
that do not intersect $\tH_1'\cup\cdots\cup \tH_{i-1}'$.
\end{proof}

\begin{proof}[Proof of Theorem \ref{thm:indbnd0}]
By Theorem \ref{thm:indbnd}, there exists a probability space
on which are defined random variables $(H_y', J_y': y \in
\H)$ such that
\begin{letlist}
\item the family $(H_y': y \in \H)$ has the same joint law
    as $(H_y: y\in\H)$,
\item the $J_y'$ are independent, and each $J_y'$ has the same law as $H_y$,
\item for all $x\in\H$,
$$
\bigcup\{H_y': y\in\H\text{ with } x\in H_y'\}
\subseteq \bigcup\{J_y': y\in\H \text{ with } x\in J_y'\}.
$$
\end{letlist}
Let
\begin{align*}
F'(x) &= 1+\sup\{l_x(H_y'): y\in \H\},\\
G'(x) &= 1+\sup\{l_x(J_y'): y\in \H\}.
\end{align*}
By (c) above, $F'(x) \le G'(x)$ for all $x\in\H$, and the claim
follows since $F'$ (\resp, $G'$) has the same law as $F$
(\resp, $G$).
\end{proof}

\section{Finiteness of Mountain-Ranges}\label{sec:subcrith}

We next apply Theorem \ref{thm:indbnd0} in order to prove
Theorem \ref{monument-simple}, which states in particular that
for $p$ sufficiently close to $1$, the lowest open Lipschitz
surface is simply the hyperplane $\H+e_d$ pierced by
mountain-ranges of finite extent. Moreover, we prove an upper
bound for the tail of the volume of a mountain-range.

The \df{footprint} $L(S)$ of a subset $S \subseteq \Z^d$
was defined at \eqref{footprint}.
Let $H_x$ (\resp, $M_x$) be the hill
(\resp, mountain) at the site $x \in \H$, as in
\eqref{defhill} (\resp, \eqref{defmtn}),
and note that $M_x \cap M_y \ne \es$ if and only
if $L(M_x) \cap L(M_y) \ne\es$.
%The mountains $M_x$ define a graph on the site-set $\H$ as follows.
For $x,y \in \H$ with $x\ne y$, write $x\mlra y$ if
$M_x\cap M_y \ne \es$. Let $G(\mlra)$ denote the graph having
vertex set $\H$, and an edge between vertices $x$ and $y$ if
and only if $x\mlra y$.
%The relation $\mlra$ is symmetric, and generates a graph
%$G(\mlra)$ on $\H$.
Connected components of $G(\mlra)$ are called
\emph{mountain-ranges}. The mountain-range $R_x$ at the vertex
$x \in \H$ is the set of all vertices $z \in \H$ such that:
there exists $k\ge 1$ and  $x_1,x_2,\dots,x_k\in \H$ such that
$x\in M_{x_1}$, $z \in M_{x_k}$, and $M_{x_i} \cap M_{x_{i+1}}
\ne \es$ for $1\le i < k$. For $z \in R_x$ with $z\ne x$, the
minimal value of $k$ above is denoted $\hd(x,z)$. Note that $x
\in R_x$, and set $\hd(x,x)=0$.

As a measure of the extent of the mountain-range at $x$, we
shall study its volume $|R_x|$ and its `mountain radius' given
by
$$
\rho_M(x): = \sup\{\hd(x,z): z\in R_x\}.
$$

The results of this section are valid for  $p$ sufficiently large. Let
$$
\sigma(d,p) := \sum_{r\ge 1} c(2r+1)^{d-2}r^{d-1}\frac{\rho^{r/2}}{1-\rho},
$$
where $\rho = 8(d-1)q<1$ and $c$ is given as in Lemma \ref{lem1}.
The function $\s$ is decreasing in $p$. Let
\begin{equation}\label{eqn:critmean}
\pM(d):= \inf\{p: \sigma(d,p) < 1\},
\end{equation}
and note that $\pM<1$. We shall work with $p>\pM$, and have not attempted to
weaken this assumption.

\pagebreak
\begin{thm}\label{thm:noperc}
Let $d \ge 2$ and $\pM<p<1$, so that $\s=\s(d,p)<1$.
\begin{letlist}
\item We have $\E_p|R_0|\le (1-\s)^{-1}$, and
$$
\P_p(\rho_M(0) \ge n) \le \s^{n}, \qq n \ge 0.
$$
\item
Let $d \ne 3$.
There exists $\gamma=\gamma(d,p)>0$
such that
$$
\P_p(|R_0| \ge n) \le \exp(-\gamma n^{1/(d-1)}), \qq n \ge 2.
$$
\item Let $d =3$. For every $\epsilon>0$, %$a\in(0,\frac12)$,
    there exists $\gamma=\gamma(p,\epsilon)>0$ such that
$$
\P_p(|R_0| \ge n) \le \exp(-\gamma n^{\frac12-\epsilon}), \qq n \ge 2.
$$
\end{letlist}
\end{thm}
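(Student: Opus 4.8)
The plan is to prove Theorem~\ref{thm:noperc} by viewing the graph $G(\mlra)$ through the lens of Theorem~\ref{thm:indbnd0}, so that the mountain-ranges become clusters in a branching-type structure with independent, i.i.d.\ building blocks whose size tail is controlled by Corollary~\ref{cor:subexp}(b). The key point is that, by Theorem~\ref{thm:indbnd0}, the lowest surface $F$ is stochastically dominated by $G(x)=\sup\{g_y(x):y\in\H\}$, where the $g_y$ are \emph{independent} with the law of the local cover $\ol f_y$; and since the local cover at $y$ is determined by (a translate of) the hill $H_y$ — indeed $\ol f_y(x)$ is essentially $1+l_x(H_y)$ when $x$ lies in the footprint — one can replace the genuinely dependent family $(M_x)$ by a dominating family built from independent copies $(J_y)$ of the hills, exactly as in the coupling of Theorem~\ref{thm:indbnd}. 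Under this domination, the event $\{z\in R_x\}$ is contained in the event that there is a chain $x=x_0,x_1,\dots,x_k=z$ in $\H$ with consecutive independent footprints $L(J_{y_i})$ overlapping.

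First I would set up the exploration of $R_0$ as a branching process: start at the origin; whenever a new vertex $x$ is reached, its ``offspring'' are the vertices of $\H$ lying in $L(M_x)$ (equivalently, within distance $\rad(M_x)$ of $x$) that have not yet been reached. Using the independent domination, the number of offspring of a vertex is stochastically dominated by $|L(J)|$ for an independent copy $J$ of a hill, translated to be centred at that vertex. The expected number of such offspring is at most $\sum_{y\in\H}\P_p(x\in M_y')$ for the dominating independent family, which by the triangle-inequality computation in the proof of Lemma~\ref{lem1} and the geometry of footprints is bounded by $\sigma(d,p)$ — this is exactly why $\sigma$ is defined the way it is, summing $c(2r+1)^{d-2}r^{d-1}\rho^{r/2}/(1-\rho)$ over shells. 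For $p>\pM$ we have $\sigma<1$, so this is a \emph{subcritical} branching process. Part~(a) then follows: $\E_p|R_0|$ is bounded by the expected total progeny $\le(1-\sigma)^{-1}$, and $\P_p(\rho_M(0)\ge n)\le\sigma^n$ since reaching mountain-distance $n$ requires the $n$th generation to be non-empty, which has probability at most $\sigma^n$ by Markov's inequality applied generation by generation.

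For parts~(b) and~(c) I would invoke a tail bound for the total progeny of a subcritical branching process whose offspring law has a stretched-exponential tail. By Corollary~\ref{cor:subexp}(b), the offspring size $|L(J)|$ satisfies $\P(|L(J)|\ge n)\le\exp(-\beta n^{1/(d-1)})$, i.e.\ the offspring distribution is stretched-exponential with exponent $1/(d-1)$. The general principle — this is the ``bound for the tail of the total progeny in a subcritical branching process with stretched-exponential family-sizes'' mentioned in the introduction — is that the total progeny then also has a stretched-exponential tail with the \emph{same} exponent $1/(d-1)$, up to an arbitrarily small loss $\epsilon$ in the exponent when the tail exponent is exactly $1/2$ (the $d=3$ case), the loss coming from the large-deviation theory of heavy-tailed (subexponential) sums where the dominant contribution is a single large jump. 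Concretely, $|R_0|\ge n$ forces either some single offspring block to have size $\gtrsim n^{1-\epsilon'}$ (probability $\le\exp(-\beta n^{(1-\epsilon')/(d-1)})$ summed over $O(n)$ vertices) or the progeny to be large without any big jump, which by an exponential-moment / Cram\'er-type estimate for the truncated offspring law is even smaller; optimising gives $\exp(-\gamma n^{1/(d-1)})$ for $d\ne3$ and $\exp(-\gamma n^{1/2-\epsilon})$ for $d=3$. The lower bound $\exp(-\lambda n^{1/(d-1)})\le\P_p(|S_0|\ge n)$ in Theorem~\ref{monument-simple} comes cheaply from Corollary~\ref{cor:subexp}(a): forcing a column of $m$ closed sites above the origin makes $|L(H_0)|\ge cm^{d-1}$, and every vertex of $L(H_0)$ lies in $S_0$.

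The main obstacle I expect is making the total-progeny tail estimate rigorous, particularly the $d=3$ borderline case and the precise role of $\epsilon$: one must handle the subexponential large-deviation phenomenon (''one big jump'') carefully, since the offspring exponent $1/(d-1)=1/2$ at $d=3$ sits exactly at the boundary between the regime where the single-big-jump heuristic gives the sharp exponent and the regime where it does not, so that only $n^{1/2-\epsilon}$ (rather than $n^{1/2}$) is attainable by this method. A secondary technical point is verifying carefully that the independent-domination coupling of Theorem~\ref{thm:indbnd0}/\ref{thm:indbnd} really does control the \emph{connectivity structure} of the $M_x$ (not just the pointwise heights $F(x)$), i.e.\ that one may run the branching-process exploration of $R_0$ against the dominating independent family $(J_y)$; this requires re-examining the coupling to see that $L(M_x')\subseteq L(J_{\text{something}})$-type containments hold along the exploration, which the path-exploration / $\sigma$-field argument in the proof of Theorem~\ref{thm:indbnd} supplies.
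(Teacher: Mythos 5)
Your proposal matches the paper's proof in all essential respects: reduce via the independent domination of Theorems~\ref{thm:indbnd0}/\ref{thm:indbnd} to a cluster built from an i.i.d.\ family of hill-footprints, explore that cluster as a subcritical branching process with mean offspring $\mu\le\sigma(d,p)<1$, and invoke a total-progeny tail bound (the paper's Theorem~\ref{thm:bptail}) for stretched-exponential family-sizes, with the $d=3$ loss of $\epsilon$ arising exactly as you diagnose from the borderline $a=\tfrac12$ case. The one step you correctly flag as needing care but do not supply — that the exploration can legitimately be run against the independent dominating family — is precisely what the paper isolates as the abstract Theorem~\ref{thm:hall2}, where the ``$0$-monotone'' hypothesis (verified for the footprints $L(H_y)$ via the FKG inequality) is what guarantees that each conditional offspring count, given the previously explored region, is stochastically smaller than the unconditional $|K_0\setminus\{0\}|$; and the total-progeny estimate is proved not by a hands-on one-big-jump argument but via the classical first-passage-time representation and cited large-deviation results for random walks with heavy-tailed steps.
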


A slightly more precise estimate may be obtained when $d=3$, but we omit this
since our methods, in their simplest form, will not deliver the anticipated
tail $\exp(-\gamma n^{1/2})$. The reason for this is that the exponent
$n^{1/2}$ is a boundary case of the large-deviation theory of random
variables with stretched-exponential tails, as explained for example in
\cite{DDS1}.
With the exception of this case, the bounds of Theorem \ref{thm:noperc}(c)
have optimal order; see Corollary \ref{cor:subexp}, and note that $L(H_0)
\subseteq R_0$.

\begin{proof}[Proof of Theorem \ref{monument-simple}]
From \eqref{defLip} we have $S_0\times\{0\}\subseteq R_0$, so the upper bound
is immediate from Theorem \ref{thm:noperc}(b,c).  The lower bound may be proved by a
minor modification of \eqref{now50}, or may be
deduced directly from Corollary \ref{cor:subexp}(a) as follows.  Let $Y$ be
the set of $y\in L(H_0)$ for which $l_y(H_0)\geq 1$.  Then
$S_0\times\{0\}\supseteq Y$, and, since every site in $L(H_0)\setminus Y$
must have a neighbour in $Y$, we have $|S_0|\geq |Y|\geq (2d-1)^{-1}
|L(H_0)|$.
\end{proof}

Theorem \ref{thm:noperc} is proved by a comparison with an
independent family, and an appeal to results for branching
processes. Let $d \ge 2$, and let $J=(J_y: y\in \H)$ be a
vector of random subsets of $\H$ such that:
\begin{letlist}
\item for each $y\in \H$ we have $y\in J_y$,
\item the sets $J_y$ are independent,
\item the distribution of the translated set $J_y - y$,
does not depend on the choice of $y$.
\end{letlist}
We shall impose a further condition on $J$, namely the following. We say that
$J$ is \df{$0$-monotone} if, for $S \subseteq \H$ with $0\notin S$, the
conditional distribution of $J_0$, given $S \cap J_0 = \es$, is
stochastically smaller than $J_0$.  Let $\P$ denote the appropriate
probability measure.

The random set $J_0$ satisfies the above condition whenever its law,
considered as a probability measure on $\{0,1\}^{\H}$, is positively
associated (a discussion of positive association may be found in \cite[Sect.\
2.2]{G-RC}). An example of this arises in a commonly studied instance of the
continuum percolation model, namely when $J_0$ has support in a given
\emph{increasing} sequence of subsets of $\H$.

For $x\in\H$ , let
\begin{equation}\label{defK}
K_x := \bigcup \{J_y: y\in\H \text{ is such that } x\in J_y\}.
\end{equation}
For $x,y \in \H$, we write $x \klra y$
if there exist $k \ge 1$ and $z_1,z_2,\dots,z_k\in\H$
such that $x\in K_{z_1}$, $y\in K_{z_k}$, and
$K_{z_i} \cap K_{z_{i+1}} \ne \es$
for $1\le i < k$.
When $z\ne x$, we write $\kd(x,z)$ for
the minimal value of $k$ above, and we set
$\kd(x,x)=0$.
Let the \df{cluster} at $x\in\H$ be the set
$$
C_x:= \{y: x\klra y\},
$$
and let its `radius' be
$$
\rho_K(x) = \sup\{\kd(x,z): z \in C_x\}.
$$

We seek conditions under which $\P(C_0<\oo)=1$ or, stronger, $\rho_K(0)$ and
$|C_0|$ have stretched-exponential tails.

\begin{thm}\label{thm:hall2}
Let $d \ge 2$.
\begin{letlist}
\item
If $J$ is $0$-monotone and $\mu:=\E|K_0| -1$
satisfies $\mu<1$, then
$\E|C_0| \le (1-\mu)^{-1}$ and
$$
\P(\rho_K(0) \ge n) \le \mu^n, \qq n \ge 0.
$$
\item
If, in addition,
there exist $\zeta>0$ and $a\in(0,\frac12)\cup\{1\}$ such that
$$
\P(|K_0|\ge n) \le \exp(-\zeta n^a), \qq n \ge 2,
$$
then there exists $\zeta' \in(0,\oo)$ such that
$$
\P(|C_0| \ge n) \le \exp(-\zeta' n^a), \qq n \ge 2.
$$
\end{letlist}
\end{thm}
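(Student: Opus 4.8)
The plan is to recognize Theorem~\ref{thm:hall2} as a domination of the cluster $C_0$ by the total progeny of a subcritical branching process, and then invoke (a) Wald-type first-moment estimates together with a geometric bound on the radius, and (b) known tail estimates for total progeny when the offspring distribution has a stretched-exponential tail. The key point enabling this is the $0$-monotonicity hypothesis: it lets us explore the cluster $C_0$ in a breadth-first manner and, at each step, dominate the newly discovered pieces $K_{z}$ by \emph{independent} copies of $K_0$, exactly as in the coupling of Theorem~\ref{thm:indbnd} (indeed $0$-monotonicity is the abstract version of the property that made that coupling work). So the first step is to set up this exploration: process the sets $K_{z_i}$ one vertex at a time along $\klra$-chains, maintaining the union $U$ of everything discovered so far; when we reveal $K_z$ for a new $z$, the conditional law of $K_z$ given $U\cap K_z\neq\es$ (we only explore $z$ that are already in $U$) is, by $0$-monotonicity and translation invariance, stochastically dominated by an independent copy of $K_0$. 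Each such copy contributes at most $|K_0|-1$ new vertices beyond $z$ itself, so the number of vertices discovered is dominated by the total progeny of a Galton--Watson process whose offspring law is that of $\xi:=|K_0|-1$ (padded to be nonnegative; note $|K_0|\ge 1$ since $0\in J_0\subseteq K_0$), which has mean $\mu<1$.

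For part (a): since $\E\xi=\mu<1$, the standard identity $\E|C_0|\le\E(\text{total progeny})=(1-\mu)^{-1}$ follows from Wald's identity (or directly: let $m_j$ be the expected size of generation $j$ in the exploration, then $m_0=1$ and $m_{j+1}\le\mu m_j$, so $\E|C_0|\le\sum_j\mu^j$). For the radius bound, observe that $\rho_K(0)\ge n$ forces generation $n$ of the exploration to be nonempty; the expected size of generation $n$ is at most $\mu^n$, so $\P(\rho_K(0)\ge n)\le\mu^n$ by Markov's inequality. This is the easy part and should go through essentially verbatim as an abstract analogue of the proof of Lemma~\ref{lem1}'s mountain bound.

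For part (b): here we add the hypothesis $\P(|K_0|\ge n)\le\exp(-\zeta n^a)$ with $a\in(0,\tfrac12)\cup\{1\}$, and we want $\P(|C_0|\ge n)\le\exp(-\zeta' n^a)$. Since $|C_0|$ is dominated by the total progeny $T$ of the subcritical Galton--Watson process with offspring $\xi=|K_0|-1$, it suffices to prove the total-progeny tail bound $\P(T\ge n)\le\exp(-\zeta' n^a)$. The case $a=1$ (exponential offspring tail) is classical and follows from the exponential moment of $T$ in the subcritical regime. The case $a\in(0,\tfrac12)$ is the substantive one: one writes $T=\sum_{i=1}^{1+\text{(earlier generations)}}\xi_i$ as a randomly stopped sum of i.i.d.\ copies of $\xi$ (the number of summands being itself at most $T$), and applies large-deviation estimates for sums of i.i.d.\ stretched-exponential random variables. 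The relevant fact is that for $\xi_1,\dots,\xi_k$ i.i.d.\ with $\P(\xi_1\ge t)\le\exp(-\zeta t^a)$ and $a<1$, one has, for $k\le\delta n$ with $\delta$ small relative to $1-\mu$, a bound of the form $\P(\xi_1+\dots+\xi_k\ge n)\le k\exp(-\zeta' n^a)+(\text{term from the bulk})$, the "one big jump" heuristic: a large total is most cheaply achieved by a single large summand, which costs $\exp(-\zeta n^a)$, while the event that $\Theta(n)$ generations survive in a subcritical process costs $\exp(-cn)\ll\exp(-n^a)$. Summing over the (geometrically controlled) number of summands and over the possible number of generations, absorbing polynomial and geometric factors into a slightly smaller constant $\zeta'$, yields the claim.

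The main obstacle is precisely this part~(b) large-deviation step, and specifically why $a=\tfrac12$ must be excluded: at $a=\tfrac12$ the "bulk" contribution (a moderate number $\approx\sqrt n$ of summands each of moderate size, or Gaussian-type fluctuations of $\Theta(n)$ summands) becomes comparable in cost to the single-big-jump contribution, so the clean "one big jump" bound $\exp(-\zeta' n^{1/2})$ is not delivered by this elementary argument — this is exactly the boundary phenomenon in the theory of subexponential/heavy-tailed large deviations referenced via \cite{DDS1}. I would therefore prove (b) by quoting or adapting an existing total-progeny tail estimate for branching processes with stretched-exponential offspring (the literature on heavy-tailed branching processes, or a direct iteration as in \cite{DDS1}), being careful that the number of i.i.d.\ summands appearing in the stopped-sum representation is itself controlled by the subcriticality (so that the relevant $k$ is $O(n)$ but with the escape-of-$\Theta(n)$-generations event being exponentially unlikely). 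The remaining details — making the exploration/coupling precise, checking measurability of the stopping structure, and verifying that $0$-monotonicity genuinely gives the stochastic domination at each step even when $K_z$ overlaps the already-explored region in a complicated way — are routine given the template already established in Sections~\ref{sec:decomp} and the proof of Lemma~\ref{lem1}.
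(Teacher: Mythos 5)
Your proposal matches the paper's strategy quite closely, and all the main ideas are present: the breadth-first exploration of $C_0$, the use of $0$-monotonicity to dominate each newly revealed family by an independent copy of $|K_0\setminus\{0\}|$, the comparison to a subcritical Galton--Watson process, the generation-mean argument for part (a), and the reduction of part (b) to a tail bound for the total progeny of a branching process with stretched-exponential family size. You have also correctly identified the reason $a=\tfrac12$ must be excluded (the crossover in heavy-tailed large-deviations where the ``single big jump'' regime stops dominating the bulk contribution).

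The one place where you stay vaguer than the paper is the total-progeny tail bound itself, which the paper isolates as a separate statement (Theorem~\ref{thm:bptail}) and proves rather than quoting whole. Their route is the random walk representation: $T$ equals the first passage to $0$ of the walk $S_0=1$, $S_n=S_{n-1}+(\xi_n-1)$. They then replace the offspring variable $F$ by a stochastically larger $F'$ with mean still below $1$ and with tail \emph{exactly} $\exp(-\gamma n^a)$ for $n\ge N$, so that the sharp first-passage asymptotics of \cite[Thms 8.2.3--8.2.4]{BB} apply for $a\in(0,\tfrac12)$, giving $\P(T'\ge n)\sim c\exp(-(1-\E F')n)$; for $a=1$ they instead invoke \cite[Thm 1]{Hey64} to sidestep the boundary issues at the edge of the moment-generating function's domain. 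Your proposed ``stopped-sum plus one-big-jump'' argument aims at the same conclusion but is a sketch rather than a proof; to complete it you would in any case end up citing results of this kind (or reproving them), and the dominating-$F'$ trick is what makes the citations usable. Apart from that, the argument is correct and is essentially the paper's.
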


This theorem is related to certain results for continuum percolation to be
found in \cite{Gou,MR}. The proof of part (b) will make use of the following
theorem for the tail of the total progeny in a branching process with
stretched-exponential family-size distribution.

\begin{thm}\label{thm:bptail}
Let $T$ be the total progeny in a branching process
with typical family-size $F$ satisfying $\E F < 1$ and
\begin{equation}\label{assum1}
\P(F>n) \le \exp(-\gamma n^a),\qq n \ge 2,
\end{equation}
for constants $\gamma\in(0,\oo)$ and $a\in(0,\frac12)\cup\{1\}$.
There exists $\gamma'\in(0,\oo)$ such that
$$
\P(T>n) \le \exp(-\gamma' n^a), \qq n \ge 2.
$$
\end{thm}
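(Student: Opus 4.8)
The plan is to bound the total progeny $T$ by splitting according to the generation structure of the branching process, and to control the contribution of each generation using the stretched-exponential tail bound \eqref{assum1} together with large-deviation estimates for sums of independent heavy-tailed random variables. First I would recall the standard representation of the total progeny: if the process starts from a single individual, then $T-1$ is the sum, over all individuals $v$ ever born, of their family sizes $F_v$; equivalently, writing $Z_k$ for the size of generation $k$, we have $T = \sum_{k\ge 0} Z_k$, and $Z_{k+1}$ is a sum of $Z_k$ independent copies of $F$. The condition $\E F = m < 1$ gives $\E Z_k = m^k$ and hence $\E T = (1-m)^{-1}$, but this only yields an exponential Markov bound of the wrong form; the point is to exploit the tail assumption to do better.

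The key step is a large-deviation input for sums of i.i.d.\ random variables $F_1,\dots,F_N$ with $\P(F>n)\le \exp(-\gamma n^a)$. For $a \in (0,\tfrac12)$ the relevant phenomenon is the ``one big jump'' principle: for a sum of $N$ such variables to be of order $x$ with $x \gg N$, the dominant contribution comes from a single term being of order $x$, so $\P(F_1+\dots+F_N > x) \lesssim N\exp(-\gamma' x^a)$ for a slightly smaller constant $\gamma'$; this is the regime where the restriction $a<\tfrac12$ (as opposed to $a<1$) is essential, and it is exactly the subexponential large-deviation theory alluded to in the surrounding text (cf.\ \cite{DDS1}). For $a=1$ (genuinely exponential tails) one instead uses a direct Chernoff bound: $\E e^{sF}<\oo$ for small $s>0$, and $\P(F_1+\dots+F_N>x)\le e^{-sx}(\E e^{sF})^N$. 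In either case I would package this as: there exist $s>0$ and $C<\oo$ such that $\P(F_1+\dots+F_N > x) \le C N \exp(-\gamma' x^a)$ whenever $x \ge 2N$, say, uniformly in $N$.

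With this in hand, I would condition on $T>n$ and argue as follows. On the event $\{T>n\}$, either the process survives to generation $K:=\lceil \delta n^a \rceil$ for a small constant $\delta>0$ — an event of probability at most $\P(Z_K\ge 1)\le \E Z_K = m^K \le \exp(-c n^a)$ — or the process dies out by generation $K$, in which case $T=\sum_{k=0}^{K-1} Z_k > n$ forces some generation $Z_k$ to exceed $n/K \gg 1$. Bounding $\P(Z_k > n/K)$ requires iterating the large-deviation estimate down through the generations: $Z_{k}$ is a sum of $Z_{k-1}$ copies of $F$, and one shows inductively, using the one-big-jump bound at each level and $\E Z_j = m^j$ to control the typical number of summands, that $\P(Z_k > y) \le \exp(-\gamma'' y^a)$ for $y$ large, with constants that can be taken uniform in $k\le K$ because the geometric decay of $m^j$ beats the polynomial growth of $K$. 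Summing over $k < K$ costs only a factor $K = O(n^a)$, which is absorbed into the exponent at the cost of shrinking the constant. Combining the two cases gives $\P(T>n)\le \exp(-\gamma' n^a)$ as claimed.

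The main obstacle is making the inductive large-deviation bound through the generations genuinely uniform: one needs that the ``loss'' in the constant $\gamma$ at each application of the one-big-jump inequality, compounded over $K \sim n^a$ generations, does not destroy the bound. The resolution is that the number of individuals in generation $j$ is typically of order $m^j$, which is summable, so the cumulative number of summands involved is $O(1)$ rather than $O(K)$; thus the multiplicative $N$-factors multiply to something of polynomial-in-$n$ size overall, not exponential, and a single fixed $\gamma'<\gamma$ suffices after the usual trick of absorbing polynomial prefactors $\exp(O(\log n))$ into $\exp(-\gamma' n^a)$ by slightly decreasing the constant. Care is also needed at the boundary $a=\tfrac12$, which is precisely why the theorem excludes $(\tfrac12,1)$ and the single point $\tfrac12$: there the one-big-jump and the Gaussian-fluctuation contributions are of the same order, and the clean bound fails; this is the source of the $\epsilon$ in Theorem \ref{thm:noperc}(c).
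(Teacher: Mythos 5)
Your approach is genuinely different from the paper's, which instead uses the classical first-passage representation: $T$ has the law of the hitting time of $0$ by a random walk $(S_n)$ started at $S_0=1$ with i.i.d.\ steps distributed as $F-1$, and the paper then cites known tail estimates for such first-passage times (Borovkov--Borovkov for $0<a<\tfrac12$, Heyde for $a=1$), after first replacing $F$ by a stochastically larger $F'$ with \emph{exact} stretched-exponential tail and $\E F'<1$.

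Your generation-wise decomposition has a quantitative gap that cannot be patched by tuning constants. To kill the ``survives to generation $K$'' term you need $m^K\le\exp(-\gamma'n^a)$, i.e.\ $K\gtrsim n^a$. But then the pigeonhole on the event ``dies by generation $K$ yet $T>n$'' only gives that some $Z_k>n/K\asymp n^{1-a}$, and even granting your uniform bound $\P(Z_k>y)\le\exp(-\gamma''y^a)$ this contributes at best $K\exp\bigl(-\gamma''\,(n/K)^a\bigr)\asymp\exp\bigl(-c\,n^{a(1-a)}\bigr)$. Since $a(1-a)<a$, this is strictly weaker than the target; optimizing over $K$ (balancing $K\log(1/m)$ against $n^aK^{-a}$) only improves the exponent to $n^{a/(a+1)}$, still $<n^a$. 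The pigeonhole loses precisely the one-big-jump structure: the typical way $\{T>n\}$ occurs is that a \emph{single} individual has family size of order $(1-m)n$, after which the process needs $\approx n$ further steps to absorb the excess --- no generation ever becomes as large as $n/K$. The random walk representation captures this directly and cheaply: $\{T>n\}\subseteq\{S_n>0\}=\{\sum_{i=1}^nF_i\ge n\}$, and the one-big-jump/Nagaev-type estimate $\P\bigl(\sum_{i=1}^nF_i\ge n\bigr)\lesssim n\,\P\bigl(F>(1-m)n\bigr)$ then yields the correct exponent $n^a$ in one step. I would replace the generation split and pigeonhole by this inclusion (or simply cite the first-passage results, as the paper does); the rest of your discussion, in particular the explanation of why $a\in[\tfrac12,1)$ is excluded, is sound.
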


\begin{proof}[Proof of Theorem \ref{thm:hall2}]
Suppose that $J$ is $0$-monotone, and let $\H$ be
ordered in some arbitrary but fixed manner starting with the
origin. We shall construct the cluster $C_0$ in an iterative
manner, and shall compare certain features of $C_0$ with those
of a branching process. This branching process will be
subcritical if $\mu := \E|K_0\sm\{0\}| < 1$, and the claims will
follow by Theorem \ref{thm:bptail}.

At stage $0$, we write $l_0:=0$; the \df{family} of $l_0$ is
defined to be the set $F_0:=K_{l_0}\sm\{l_0\}$, and we declare
$l_0$ to be \df{dead} and sites in $F_0$ to be \df{live}. At
stage $1$, we let $l_1$ be the earliest live site, and define
its family as the set $F_{1}:=K_{l_1}\sm F_0$; we declare sites
in $F_1$ to be live, and $l_1$ to be \df{dead}. Suppose that,
after stage $n-1$, we have defined the families
$F_0,F_1,\dots,F_{n-1}$, and have a current live set $G_{n-1}$
and dead set $D_{n-1}$. At stage $n$, we let $l_{n}$ be a live
site (chosen in a way that we describe next), and declare
\begin{align*}
F_{n} &:= K_{l_n}\sm (\{l_0\}\cup F_0\cup\dots\cup F_{n-1}),\\
G_n &:=(G_{n-1}\cup F_n)\sm \{l_n\},\\
D_n &:=D_{n-1}\cup\{l_n\}.
\end{align*}
The site $l_n$ is chosen as follows. Let $m:=\min\{k: F_k \cap
G_{n-1} \ne \es\}$, the index of the earliest family containing
a live site, and let $l_n$ be the earliest live site in
$F_m$.

This process either terminates or continues forever. In the
former case, let $N$ be the greatest value of $n$ for which $F_n$
is defined, and set $N=\oo$ in the latter case. It is easily
seen that $C_0=\bigcup_{n=0}^N F_n$.

We claim that the above process is dominated stochastically by
a branching process with family-sizes distributed as
$X:=|K_0\sm\{0\}|$, and we explain this by a recursive
argument. The size of the family $F_0$ of $l_0$ is evidently
distributed as $X$, and is given in terms of the sequence $J$
as follows. Let
$$
Y_0:= \{y\in \H: l_0 \in J_y\}.
$$
Then $F_0=\left(\bigcup_{y\in Y_0} J_y\right)\sm\{l_0\}$.  In
determining the family $F_1$ of $l_1$, we set
$$
Y_1:= \{y \in \H \sm Y_0: l_1 \in J_y\},
$$
and we have that
$$
F_1= \left(\bigcup_{y\in Y_1} J_y\right)\sm (\{l_0\}\cup F_0).
$$
Since $J$ is $0$-monotone, given $Y_0$ the family
$(J_y: y\notin Y_0)$ is stochastically dominated by a family
$(J_y': y \notin Y_0)$ of independent random sets such that
$J_y'$ has the distribution of $J_y$. It follows that, given
$F_0$, the conditional distribution of $|F_1|$ is no greater than
that of $X$.

Let $n \ge 1$. Suppose stage $n-1$ is complete, and write
\begin{align*}
Y_{i} &:= \bigl\{y\in\H\sm(Y_0\cup\dots\cup Y_{i-1}): l_i \in J_y\bigr\},\\
F_i &:=
\left(\bigcup_{y\in Y_i} J_y\right)\sm (\{l_0\}\cup F_0\cup\dots\cup F_{i-1}),
\end{align*}
for $0\le i \le n$. Given $(l_i,Y_i,F_i)$ for $0\le i < n$, the
set $(J_y: y \notin Y_0\cup\dots\cup Y_{n-1})$ is
stochastically dominated by an independent family $(J_y':
y\notin Y_0\cup\dots\cup Y_{n-1})$ where each $J'_y$ has the
unconditional law of $J_y$. Therefore, the conditional law of
$|F_n|$ is stochastically smaller than that of $X$.  This
completes the proof of domination by a branching process.

The dominating branching process has mean family-size $\mu:=
\E|K_0\sm\{0\}|$ which, by assumption, satisfies $\mu<1$. The
process is therefore subcritical. It follows in particular that
$\E|C_0| \le (\E|K_0|-1)^{-1}$, and $\P(|C_0|<\oo) = 1$. The
radius $\rho_K(0)$ of $C_0$ is stochastically smaller than the
number of generations of the branching process, so that
$$
\P(\rho_K(0) \ge n) \le  \mu^n,
$$
and part (a) of the theorem is proved.

Part (b) is a consequence of Theorem \ref{thm:bptail}.
\end{proof}

\begin{proof}[Proof of Theorem \ref{thm:bptail}]
The total progeny of a branching process is connected to
the length of a busy period of a certain queue, and to a first-passage
time of a certain random walk. One may construct a proof
that exploits the connection to queueing theory and makes use of
\cite[Thm 1.2]{BDK}, but instead we shall use the representation
in terms of random walks.

Let $X$ be a random variable taking values in the non-negative integers,
such that $\E X<1$.
Consider a random walk $(S_n: n \ge 0)$ on the integers with
$S_0=1$ and steps
distributed as $X-1$.
Let
$$
T:= \inf\{n: S_n = 0\}.
$$
It is
standard that $T$ has the same distribution as the size
of the total progeny of a branching process with family-sizes
distributed as $X$. The relationship between the branching process and
the random walk is as follows. Suppose the elements of
the branching process are ordered within families in some arbitrary way.
When an element having a family of size $F$ arrives, the walker
is displaced by a step $F-1$.

Let $F$ satisfy $\E F < 1$ and \eqref{assum1} with
some $\gamma >0$ and $a\in(0,\frac12)$.
Let $F'$ be a random variable taking values in the non-negative integers
such that $\E F' < 1$ and
\begin{equation}\label{now42}
\P(F' \ge n) = \exp(-\gamma n^a), \qq n \ge N,
\end{equation}
for some $N \ge 1$. Let $T'$ be the first-passage time to $0$ of
a random walk with steps distributed as $F'-1$, starting
at $1$.

If $0<a < \frac12$, by \cite[Thms 8.2.3--8.2.4]{BB},
 there exists $c \in (0,\oo)$ such that
\begin{equation}\label{now41}
\P(T' \ge n) \sim c \exp\bigl(-[1- \E F']n\bigr)\q \text{as} \q n\to\oo.
\end{equation}

Let $0<\eps < 1-\E F$.
It is an elementary exercise to find a positive integer $N$ and
a random variable $F'$, taking values in the non-negative integers, such that:
\begin{letlist}
\item $F$ is stochastically smaller than $F'$,
\item $\E F' \le \E F + \eps$,
\item \eqref{now42} holds.
\end{letlist}
With $F'$ chosen thus, we have $\E F' < 1$ by (b).
Therefore, the corresponding first-passage time $T'$ satisfies \eqref{now41}.
The claim of the theorem follows by the fact that
$T$ is stochastically smaller than $T'$.

When $a=1$, basically the same argument is valid. When
the moment generating function $M(\theta) = \E(e^{\theta F'})$
satisfies
$$
\theta_0 := \sup\{\theta:M(\theta)<\oo\}>0,
$$
quite precise estimates are known for the tail of $T'$ in terms
of the minimum of $M$. Complications arise when the minimum is
achieved at $\theta=\theta_0$, as discussed in \cite[Sect.\
8.2.3]{BB}. We avoid these details here by citing \cite[Thm
1]{Hey64} in place of \cite[Thms 8.2.3--8.2.4]{BB} above.
\end{proof}

\begin{proof}[Proof of Theorem \ref{thm:noperc}]
Let $\pM<p<1$, implying in particular that $\rho<1$.
Let $J=(J_y: y \in \H)$ be independent subsets
of $\H$ such that, for each $y$, $J_y$ has the same law as
$L(H_y)$. The corresponding sets $K_x$ are given as in \eqref{defK}.
We shall apply Theorem \ref{thm:hall2} to
the sequence $J$.

By Lemma \ref{lem1},
$\E|L(M_0)| \le 1+\s(d,p)$. Since $J_0$ has the same law as
$L(H_0)$, and $\rad(H_0) = \rad(L(H_0))$, the proof of
\eqref{now26} may be followed with $M_0$ and $H_y$ replaced by
$K_0$ and $J_y$ respectively. This yields
\begin{equation}\label{now26b}
\P_p(\rad(K_0)\ge r) \le cr^{d-1} \frac {\rho^{r/2}}{1-\rho},\qq r\ge 1,
\end{equation}
and hence $\mu := \E|K_0|-1$ satisfies
$$
\mu\le \s(d,p) < 1.
$$

We claim that $J$
is $0$-monotone. Let $S \subseteq \H$ with $0\notin
S$, and let $A \subseteq \H$. The events $\{L(H_0) \subseteq
A\}$ and $\{L(H_0) \cap S = \es\}$ are increasing, and
therefore
\begin{equation}\label{eqn:dec}
\P_p(L(H_0) \subseteq A \mid L(H_0) \cap S = \es)
 \ge \P_p(L(H_0)\subseteq A),
\end{equation}
by the FKG inequality. The claim follows.

By either Theorem \ref{thm:indbnd0} or Theorem
\ref{thm:indbnd}, $|R_0|$ and $\rho_M(0)$ are bounded above
(stochastically) by $|C_0|$ and $\rho_K(0)$. The claim of part (a) follows
by Theorem \ref{thm:hall2}(a).

Let $d\ge 2$.  The proof of Corollary
\ref{cor:subexp}(b) holds with $M_0$ and $L(M_0)$ replaced by
$K_0$, and with \eqref{now26b} in place of \eqref{now26}, yielding that
\begin{equation}\label{now34}
\P(|K_0| \ge n) \le \exp(-\beta n^{1/(d-1)}), \qq n \ge 2,
\end{equation}
where $\beta>0$.

When $d \ne 3$,
claim (b) holds by \eqref{now34} and Theorem \ref{thm:hall2}(b).
Let $d=3$.
By \eqref{now34}, for all $a\in(0,\frac12)$
there exists $\zeta>0$ such that
$$
\P_p(|K_0| \ge n) \le \exp(-\zeta n^a), \qq n \ge 2,
$$
and claim (c) follows by Theorem \ref{thm:hall2}(b).
\end{proof}

\section*{Acknowledgements}
GRG acknowledges support from Microsoft Research during his
stay as a Visiting Researcher in the Theory Group in Redmond.
This work was completed during his attendance at a programme at
the Isaac Newton Institute, Cambridge. Daryl Daley and Sergey
Foss kindly advised the authors on the literature concerning
random walks with heavy-tailed steps.

\bibliographystyle{amsplain}
\bibliography{hill}

\end{document}